\numberwithin{equation}{section}
\numberwithin{equation}{section}
\theoremstyle{plain}
\newtheorem{Th}{Theorem}[section]
\newtheorem{Lemma}[Th]{Lemma}
\newtheorem{Prop}[Th]{Proposition}
\newtheorem{Cor}[Th]{Corollary}
\theoremstyle{definition}
\newtheorem{Def}[Th]{Definition}
\newtheorem{Conj}[Th]{Conjecture}
\newtheorem{Rem}[Th]{Remark}
\newtheorem{Prob}[Th]{Problem}
\def\arb{{\rm arb}}
\def\allarb{{\rm allarb}}
\def\sp{{\rm sp}}
\DeclareMathOperator{\conv}{Conv}
\begin{document}

\title{Extremal number of arborescences}

\author[A. Bandekar]{Aditya Bandekar}
\address{University of Toronto, Department of Mathematics, 40 St. George Street Toronto}
\email{aditya.bandekar@mail.utoronto.ca}

\author[P. Csikv\'ari]{P\'{e}ter Csikv\'{a}ri}
\address{HUN-REN Alfr\'ed R\'enyi Institute of Mathematics, H-1053 Budapest Re\'altanoda utca 13-15 \and ELTE: E\"{o}tv\"{o}s Lor\'{a}nd University Mathematics Institute, Department of Computer
Science H-1117 Budapest, 
P\'{a}zm\'{a}ny P\'{e}ter s\'{e}t\'{a}ny 1/C}
\email{peter.csikvari@gmail.com}

\author[B. Mascuch]{Benjamin Mascuch}
\address{Tufts University, Department of Mathematics, 177 College Ave
Medford, MA 02155}
\email{benjamin.mascuch@tufts.edu}

\author[D. T\'ark\'anyi]{Damj\'an T\'ark\'anyi}
\address{ELTE E\"{o}tv\"{o}s Lor\'{a}nd University, H-1117 Budapest, P\'{a}zm\'{a}ny P\'{e}ter s\'{e}t\'{a}ny 1/C}
\email{damjantarkanyi@gmail.com}

\author[M. Telekes]{M\'arton Telekes}
\address{Budapest University of Technology and Economics, M\"{u}egyetem rkp. 3., H-1111 Budapest, Hungary}
\email{telekesmarton@gmail.com}

\author[L. T\'othm\'er\'esz]{Lilla T\'othm\'er\'esz}
\address{ELTE E\"{o}tv\"{o}s Lor\'{a}nd University, Mathematics Institute, Department of Operations Research, H-1117 Budapest, 
P\'{a}zm\'{a}ny P\'{e}ter s\'{e}t\'{a}ny 1/C}
\email{lilla.tothmeresz@ttk.elte.hu}

\thanks{
PC and LT were supported by the MTA-R\'enyi Counting in Sparse Graphs ''Momentum'' Research
Group. PC was also supported by the Dynasnet European Research Council Synergy project -- grant number ERC-2018-SYG 810115.
LT was also supported by the National Research, Development and Innovation Office of Hungary -- NKFIH, grant no.\ 132488, by the J\'anos Bolyai Research Scholarship of the Hungarian Academy of Sciences, and by the \'UNKP-23-5 New National Excellence Program of the Ministry for Innovation and Technology, Hungary.
}

\begin{abstract}
In this paper we study the following extremal graph theoretic problem: Given an undirected Eulerian graph $G$, which Eulerian orientation minimizes or maximizes the number of arborescences? 
We solve the minimization for the complete graph $K_n$, the complete bipartite graph $K_{n,m}$, and for the so-called double graphs, where there are even number of edges between any pair of vertices.

In fact, for $K_n$ we prove the following stronger statement. If $T$ is a tournament on $n$ vertices with out-degree sequence $d_1^+,\dots ,d^+_n$, then
$$\mathrm{allarb}(T)\geq \frac{1}{n}\left(\prod_{k=1}^n(d^+_k+1)+\prod_{k=1}^nd^+_k\right),$$
where $\allarb(T)$ is the total number of arborescences. Equality holds if and only if $T$ is a locally transitive tournament.

We also give an upper bound for the number of arborescences of an Eulerian orientation for an arbitrary graph $G$. This upper bound can be achieved on $K_n$ for infinitely many $n$. 
\end{abstract}

\maketitle

\section{Introduction}

An arborescence of a directed graph $D$ rooted at some vertex $v$ is a spanning tree of $D$ such that each vertex different from $v$ has out-degree $1$, and the root vertex $v$ has out-degree $0$. In other words, every edge of the spanning tree is oriented towards the root vertex. We denote the number of arborescences rooted at vertex $v$ by $\arb(D,v)$.
Given an Eulerian digraph $D$ the quantity $\arb(D,v)$ does not depend on $v$ (this follows from the BEST theorem \cite{de1951circuits}) and we will simply denote it by $\arb(D)$. For a not necessarily Eulerian digraph let 
$$\text{allarb}(D)=\sum_{v\in V}\arb(D,v).$$

In this paper we study the following extremal graph theoretical problems. 
\begin{Prob}\label{prob:minmax_Eulerian}
Given an undirected Eulerian graph $G$ which Eulerian orientation $O$ minimizes or maximizes $\arb(O)$?
\end{Prob}
\begin{Prob}\label{prob:minmax_general}
Among all orientations of an undirected graph, which orientation minimizes or maximizes the quantity $\allarb(O)$?
\end{Prob}

Problem \ref{prob:minmax_Eulerian} is motivated by a geometric problem, namely the study of the symmetric edge polytope of graphs and regular matroids. It turns out that Problem \ref{prob:minmax_Eulerian} is equivalent to finding the facet of the symmetric edge polytope of the cographic matroid of $G$ with minimal or maximal volume. For a more detailed explanation of this connection, see Section \ref{ss:geom_motivation}. Note that we will not use this connection in this paper, and every result in this paper can be understood without understanding the geometric motivation.

Problem~\ref{prob:minmax_Eulerian} is also closely connected to the celebrated BEST theorem due to de Bruijn, van Aardenne--Ehrenfest \cite{de1951circuits}, Smith and Tutte claiming that the number of Eulerian tours of an Eulerian digraph $D$ is
$$\arb(D)\prod_{v\in V}(d^+(v)-1)!,$$
where $d^+(v)$ is the out-degree of vertex $v$. This theorem shows that the maximizing or minimizing Eulerian orientation also maximizes or minimizes the number of Eulerian tours.

\subsection{Results.} Our first result is a general lower bound for $\allarb$ on tournaments, that is, arbitrary orientations of complete graphs. To spell out the case of equality of this theorem we need the following definition.

\begin{Def} A digraph is called locally transitive if for every vertex $v$ the out-neighbors $N^+(v)$ and the in-neighbors $N^-(v)$ both induce a transitive tournament.
\end{Def}

\begin{Th} \label{lower_bound_K_n}
Let $T$ be a tournament on $n$ vertices with out-degree sequence $d_1^+,\dots ,d^+_n$. Then
$$\mathrm{allarb}(T)\geq \frac{1}{n}\left(\prod_{k=1}^n(d^+_k+1)+\prod_{k=1}^nd^+_k\right).$$
Equality holds if and only if $T$ is a locally transitive tournament.
\end{Th}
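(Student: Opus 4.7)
My plan is to reduce the inequality to a determinantal identity via the Matrix--Tree Theorem combined with the matrix--determinant lemma, and then analyze the resulting determinant combinatorially. Let $L=D^+-A$ be the directed Laplacian of $T$. Since every tournament has a unique minimal (sink) strongly connected component, $\dim\ker L=1$, spanned by $\mathbf{1}$. The identity $L\cdot\mathrm{adj}(L)=\det(L)I=0$ forces each column of $\mathrm{adj}(L)$ to lie in $\ker L$, hence to be a scalar multiple of $\mathbf{1}$; and the Matrix--Tree Theorem gives $\mathrm{adj}(L)_{vv}=\arb(T,v)$. So column $v$ of $\mathrm{adj}(L)$ equals $\arb(T,v)\mathbf{1}$, and the matrix--determinant lemma yields
$$ n\cdot\allarb(T) \;=\; \mathbf{1}^{T}\mathrm{adj}(L)\mathbf{1} \;=\; \det(L+J).$$
Combining with the tournament identity $A+A^{T}=J-I$, we can rewrite $L+J=D^+ + I + A^{T}$, so the goal becomes to prove
$$\det(D^+ + I + A^{T})\;\ge\;\prod_{i}(d_i^+ + 1) + \prod_{i} d_i^+.$$

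Next, I would expand the determinant by multilinearity on the rows, using that $D^+ + I$ is diagonal, to obtain
$$\det(D^+ + I + A^{T}) \;=\; \sum_{S\subseteq [n]}\Bigl(\prod_{i\notin S}(d_i^+ + 1)\Bigr)\det(A(T[S])),$$
where $A(T[S])$ is the adjacency matrix of the induced sub-tournament on $S$. The $S=\emptyset$ term contributes exactly $\prod(d_i^+ + 1)$, while the $|S|\in\{1,2\}$ terms vanish (no loops or $2$-cycles in a tournament). Thus the theorem reduces to the inequality
\begin{equation}\label{eq:key}
  \sum_{|S|\ge 3}\prod_{i\notin S}(d_i^+ + 1)\det(A(T[S])) \;\ge\;\prod_{i} d_i^+,
\end{equation}
which must be verified with equality iff $T$ is locally transitive.

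The main obstacle is establishing \eqref{eq:key}. Using the classical cycle-cover expansion $\det(A(T[S]))=\sum_{\mathcal{C}}(-1)^{|S|-|\mathcal{C}|}$ over cycle covers of $T[S]$ (all cycles of length $\ge 3$), the left-hand side becomes a signed sum over vertex-disjoint directed cycle collections of $T$. The right-hand side $\prod d_i^+$, on the other hand, has the clean combinatorial meaning as the number of maps $f\colon V\to V$ with $f(v)\in N^{+}(v)$; each such $f$ has a canonical cyclic part. My primary strategy would be to build a sign-reversing involution on suitable pairs (a disjoint cycle collection together with a partial out-choice on the remaining vertices) whose fixed points are in bijection with the $\prod d_i^+$ maps $f$, giving \eqref{eq:key} with a nonnegative surplus. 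If this proves intractable, a backup is induction on $n$: when $T$ has a dominating vertex $v$ a direct count gives $\allarb(T)=(n-1)\allarb(T-v)$ and the inductive step closes easily; for strongly connected $T$ one would pick a vertex participating in many $3$-cycles and attempt a local switching reduction to a case already handled.

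The equality characterization would follow by tracing through this argument. For locally transitive $T$, one uses the known circular structure (vertices on a cyclic order with each $N^{+}(v)$ a consecutive arc) to verify that the signed sum in \eqref{eq:key} collapses to exactly $\prod d_i^+$; conversely, if some $N^{+}(v)$ or $N^{-}(v)$ contains a cyclic triangle, the corresponding extra cycle-cover term is shown to contribute strictly positively to the left-hand side of \eqref{eq:key}.
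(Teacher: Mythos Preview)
Your reduction to $n\cdot\allarb(T)=\det(L+J)$ and the multilinear expansion into $\sum_S\bigl(\prod_{i\notin S}(d_i^++1)\bigr)\det(A(T[S]))$ are both correct, and up to this point you are on essentially the same track as the paper. The genuine gap is that you stop exactly at the hard step: the inequality you label \eqref{eq:key} is the entire content of the theorem, and you do not prove it. The terms $\det(A(T[S]))$ take both signs (for instance a tournament containing a directed $4$-cycle easily gives $\det(A(T[S]))=-1$), so the sum is genuinely signed, and the involution and induction schemes you sketch are only vague hopes, not arguments. In particular, the induction ``backup'' does not close: removing a dominating vertex $v$ gives $\allarb(T)=(d^+(v)+1)\,\allarb(T-v)$ rather than $(n-1)\,\allarb(T-v)$ in general, and the strongly connected case has no proposed mechanism at all. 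The equality analysis is likewise only a plan.

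The paper avoids this obstacle by a small but decisive algebraic twist. Instead of $L+J$, it works with $2L+J$, which (using $A+A^T=J-I$) equals $(2D^++I)-M(T)$ where $M(T)=A-A^T$ is the \emph{skew-symmetric} adjacency matrix. Expanding this determinant over subsets now produces principal minors $\det(M(T)_{S^c,S^c})$, and these are always nonnegative (zero for odd $|S^c|$) by skew-symmetry. A parity trick---$M(T)\equiv J-I\pmod 2$, and $\det(J_{2k}-I_{2k})$ is odd---forces each even-size minor to be $\ge 1$. Summing over $S$ with $|S|\equiv n\pmod 2$ yields exactly $2^{n-1}\bigl(\prod(d_k^++1)+\prod d_k^+\bigr)$, and the inequality drops out with no cancellation to manage. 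The equality case becomes the clean condition that every even principal minor of $M(T)$ equals $1$, which the paper then shows (via Newton's inequalities applied to $\varphi_{M(T)}$) is equivalent to local transitivity. Your expansion via $A^T$ misses this positivity structure; the paper's choice of $2L+J$ over $L+J$ is precisely what makes the proof go through.
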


Note that Theorem~\ref{lower_bound_K_n} is not true for general digraphs as it can occur that a digraph has no arborescence at all. 

\begin{Def}
    For odd $n=2d+1$, we call the following tournament the \emph{swirl tournament on $n$ vertices} and we will denote it by $SW_n$: Let the vertex set be $[n]$, and let the edges be $\{(i,i+k \ \bmod \ n) : i\in [n], 1\leq k\leq d\}$. See Figure \ref{fig:swirl_tournament} for an example.
\end{Def}

The following theorem is a simple specialization of Theorem~\ref{lower_bound_K_n} for Eulerian tournaments.

\begin{Th} \label{Eulerian_lower_bound_K_n}
Let $T_n$ be an Eulerian tournament on $n$ vertices. Then
\begin{align*}
\arb(T_n) \geq \frac{1}{n^2}\left(\left(\frac{n+1}{2}\right)^{n} + \left(\frac{n-1}{2}\right)^{n}\right)
\end{align*}
with equality if and only if $T_n\cong SW_n$. 
\end{Th}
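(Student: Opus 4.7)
My plan is to specialize Theorem~\ref{lower_bound_K_n} and then identify the equality case. Since $T_n$ is Eulerian, every out-degree equals $d := (n-1)/2$, so the two products on the right-hand side of Theorem~\ref{lower_bound_K_n} collapse to $\left(\frac{n+1}{2}\right)^n$ and $\left(\frac{n-1}{2}\right)^n$. The identity $\allarb(T_n) = n \cdot \arb(T_n)$ for Eulerian digraphs, already noted in the introduction, then allows us to divide by $n$ and obtain the stated inequality.

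For the equality case, Theorem~\ref{lower_bound_K_n} tells us that equality is equivalent to $T_n$ being locally transitive. Since $SW_n$ is easily verified to be regular and locally transitive, it remains to show the converse: any regular locally transitive tournament on $n$ vertices is isomorphic to $SW_n$.

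To do so, I would fix any vertex $v_0$ and write the transitive orders on $N^+(v_0)$ and $N^-(v_0)$ as $a_1 \to a_2 \to \cdots \to a_d$ and $b_1 \to b_2 \to \cdots \to b_d$ respectively (so $a_i \to a_j$ iff $i < j$, and analogously for the $b$'s). A direct degree count yields two boundary observations: $a_d$'s in-neighbourhood is forced to be $\{v_0, a_1, \dots, a_{d-1}\}$, hence $N^+(a_d) = N^-(v_0)$; dually, $b_1$ is beaten by every vertex of $N^+(v_0)$. With these in hand, I would prove by induction on $j$ that
\[ N^+(a_j) = \{a_{j+1}, \dots, a_d\} \cup \{b_1, \dots, b_j\}. \]
The inductive step uses the induction hypothesis to determine, for each $k \leq j$, that $b_k$'s out-neighbours among $\{a_1, \dots, a_{j-1}\}$ are exactly $\{a_1, \dots, a_{k-1}\}$; since together with $v_0, b_{k+1}, \dots, b_d$ this already exhausts $b_k$'s out-degree $d$, we conclude $a_j \to b_k$, and an out-degree count for $a_j$ then forces the claimed set equality.

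Under the identification $v_0 \leftrightarrow 0$, $a_i \leftrightarrow i$, and $b_i \leftrightarrow d+i$, the resulting structure reads ``every vertex $v$ beats exactly $v+1, v+2, \dots, v+d \pmod n$'', which is the definition of $SW_n$, so $T_n \cong SW_n$. I expect the main obstacle to be carrying out the inductive step cleanly: the key observation is that one must route through the out-neighbourhoods of the $b_k$'s (rather than directly inspecting those of the $a_j$'s) in order to pin down which $b$'s are beaten by $a_j$.
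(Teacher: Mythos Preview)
Your derivation of the inequality is identical to the paper's: specialize Theorem~\ref{lower_bound_K_n} to the regular case and divide $\allarb = n\cdot\arb$ by $n$.

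For the equality characterization the two arguments diverge. The paper does not prove uniqueness by hand; instead it quotes a structural result of Huang on locally transitive digraphs (every such digraph admits a cyclic vertex ordering in which each out-neighbourhood is a cyclic interval), from which it is immediate that a \emph{regular} locally transitive tournament is determined up to isomorphism, hence is $SW_n$. Your route is a direct degree-counting induction that reconstructs $N^+(a_j)=\{a_{j+1},\dots,a_d,b_1,\dots,b_j\}$ step by step, and it is correct: the key point---that the induction hypothesis pins down $N^+(b_k)\cap\{a_1,\dots,a_{j-1}\}$ and then the out-degree of $b_k$ leaves no room for $b_k\to a_j$---goes through exactly as you describe, and the final relabelling to $\mathbb{Z}/n\mathbb{Z}$ matches $SW_n$. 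In effect you are reproving (the Eulerian case of) Huang's theorem from scratch. The trade-off is clear: the paper's proof is a two-line appeal to the literature, while yours is self-contained and elementary but longer. Either is acceptable; if you keep yours, you might note that the base case $j=1$ is already handled by the same mechanism (with the ``known'' set $\{a_1,\dots,a_{j-1}\}$ empty), so the separate boundary observations are illustrative rather than logically necessary.
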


\begin{figure}[H]
    \begin{tikzpicture}[nodes=state]
    \tikzstyle{every node}=[draw,shape=circle,inner sep=0,minimum size=2mm,fill=black]
    \tikzset{edge/.style = {->,> = latex}}
    \def \number {7}
    \def \radius {2cm}
    \def \degree {360/\number}
    \foreach \s in {1,...,\number}
    {
        \node at ({\degree * (\s -1)}:\radius) (\s) {};
    }
    \draw[-{Latex[length=5pt]}] (1) to (2);
    \draw[-{Latex[length=5pt]}] (1) to (3);
    \draw[-{Latex[length=5pt]}] (1) to (4);
    \draw[-{Latex[length=5pt]}] (2) to (3);
    \draw[-{Latex[length=5pt]}] (2) to (4);
    \draw[-{Latex[length=5pt]}] (2) to (5);
    \draw[-{Latex[length=5pt]}] (3) to (4);
    \draw[-{Latex[length=5pt]}] (3) to (5);
    \draw[-{Latex[length=5pt]}] (3) to (6);
    \draw[-{Latex[length=5pt]}] (4) to (5);
    \draw[-{Latex[length=5pt]}] (4) to (6);
    \draw[-{Latex[length=5pt]}] (4) to (7);
    \draw[-{Latex[length=5pt]}] (5) to (6);
    \draw[-{Latex[length=5pt]}] (5) to (7);
    \draw[-{Latex[length=5pt]}] (5) to (1);
    \draw[-{Latex[length=5pt]}] (6) to (7);
    \draw[-{Latex[length=5pt]}] (6) to (1);
    \draw[-{Latex[length=5pt]}] (6) to (2);
    \draw[-{Latex[length=5pt]}] (7) to (1);
    \draw[-{Latex[length=5pt]}] (7) to (2);
    \draw[-{Latex[length=5pt]}] (7) to (3);
    \end{tikzpicture}
    \caption{The swirl tournament $SW_7$, which is the minimizing Eulerian orientation of $K_7$.} \label{fig:swirl_tournament}
\end{figure}
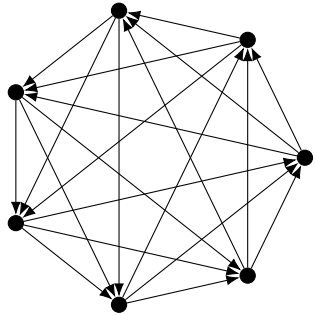

The following theorem is another immediate corollary of Theorem~\ref{lower_bound_K_n}.

\begin{Th} \label{transitive_tournament}
Let $T_n$ be a tournament on $n$ vertices, and let $TR_n$ be the transitive tournament on $n$ vertices. Then
$$\mathrm{allarb}(T_n)\geq \mathrm{allarb}(TR_n)$$
with equality if and only if $T_n\cong TR_n$. 
\end{Th}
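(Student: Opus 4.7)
The plan is to derive Theorem~\ref{transitive_tournament} directly from Theorem~\ref{lower_bound_K_n}. First I would verify the value $\mathrm{allarb}(TR_n)=(n-1)!$: the transitive tournament has out-degree multiset $\{0,1,\dots,n-1\}$ and is locally transitive (every out- and in-neighborhood is itself a transitive sub-tournament), so the equality case of Theorem~\ref{lower_bound_K_n} yields $\mathrm{allarb}(TR_n)=\frac{1}{n}(n!+0)=(n-1)!$. The task is therefore reduced to proving that for every tournament score sequence,
$$\prod_{k=1}^n(d_k^+ +1)+\prod_{k=1}^n d_k^+ \;\geq\; n!,$$
with equality precisely when this multiset equals $\{0,1,\dots,n-1\}$.

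Since $\prod d_k^+\geq 0$, it suffices to show $\prod(d_k^+ +1)\geq n!$, and this is where I would invoke Landau's theorem. Sorting the out-degrees in increasing order, Landau's inequalities read $\sum_{k=1}^i d_k^+\geq\binom{i}{2}$ for every $i$, with equality at $i=n$. Adding $i$ to both sides yields
$$\sum_{k=1}^i(d_k^+ +1)\;\geq\;\binom{i+1}{2}\;=\;\sum_{k=1}^i k,$$
again with equality at $i=n$, which is precisely the statement that the multiset $\{d_k^+ +1\}$ is majorized by $\{1,2,\dots,n\}$. Applying Karamata's inequality to the strictly concave function $\log$ then gives $\sum_k\log(d_k^+ +1)\geq \log n!$, i.e.\ $\prod(d_k^+ +1)\geq n!$, with equality if and only if the two multisets coincide.

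Combining yields $\mathrm{allarb}(T_n)\geq(n-1)!$. For the equality discussion, $\mathrm{allarb}(T_n)=(n-1)!$ forces tightness in both Theorem~\ref{lower_bound_K_n} and the majorization step. The latter forces the out-degree multiset of $T_n$ to equal $\{0,1,\dots,n-1\}$, whereupon a short induction closes the argument: the unique vertex of out-degree $n-1$ beats every other vertex, and its removal yields a tournament on $n-1$ vertices whose out-degree multiset is $\{0,1,\dots,n-2\}$, transitive by the inductive hypothesis. The only substantive step in the whole plan is the observation that Landau's theorem encodes exactly the majorization required to control $\prod(d_k^+ +1)$; everything else is bookkeeping built on top of Theorem~\ref{lower_bound_K_n}, consistent with the statement that Theorem~\ref{transitive_tournament} is an immediate corollary.
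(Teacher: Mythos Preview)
Your argument is correct and follows the same overall architecture as the paper: both derive the result from Theorem~\ref{lower_bound_K_n} by computing $\allarb(TR_n)=(n-1)!$ and then showing that every tournament score sequence satisfies $\prod_k(d_k^+ +1)\geq n!$ with equality only for $\{0,1,\dots,n-1\}$.

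The difference lies in how that product inequality is established. You invoke Landau's theorem to set up a majorization $\{1,\dots,n\}\succ\{d_k^+ +1\}$ and then apply Karamata to the strictly concave logarithm. The paper instead gives a direct swap argument: if two out-degrees coincide, say $d^+(u)=d^+(v)=d$, then flipping the edge between $u$ and $v$ replaces the factor $(d+1)^2$ by $d(d+2)<(d+1)^2$, strictly decreasing the product while staying inside the class of tournaments; hence the minimum is attained only when all out-degrees are distinct, i.e.\ they are $0,1,\dots,n-1$. The paper's route is more elementary and self-contained (no need to quote Landau or Karamata), while yours is more structural and makes explicit why the transitive score sequence is extremal via Schur-concavity. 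Similarly, the paper computes $\allarb(TR_n)=(n-1)!$ by observing that $L(TR_n)$ is upper triangular, whereas you read it off from the equality case of Theorem~\ref{lower_bound_K_n}; both are valid. Your inductive verification that the score sequence $\{0,1,\dots,n-1\}$ forces $T_n\cong TR_n$ is the standard argument the paper alludes to when it says this degree sequence ``determines the transitive tournament.''
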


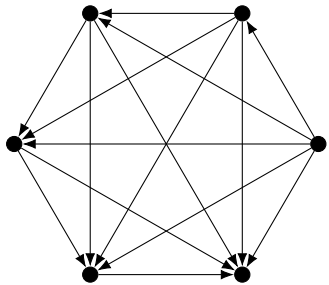
\begin{figure}[H]
    \begin{tikzpicture}[nodes=state]
    \tikzstyle{every node}=[draw,shape=circle,inner sep=0,minimum size=2mm,fill=black]
    \tikzset{edge/.style = {->,> = latex}}
    \def \number {6}
    \def \radius {2cm}
    \def \degree {360/\number}
    \foreach \s in {1,...,\number}
    {
        \node at ({\degree * (\s -1)}:\radius) (\s) {};
    }
    \foreach \s in {2,...,\number}
    {
        \pgfmathsetmacro\result{\s - 1}
        \foreach \i in {1,...,\result}
        {
            \draw[-{Latex[length=5pt]}] (\i) to (\s);
        }
    }
    \end{tikzpicture}
    \caption{The transitive tournament $TR_6$}
\end{figure}

Theorem \ref{lower_bound_K_n} is surprisingly tight as the following simple statement is true for any digraph.

\begin{Prop} \label{trivial_upper_bound}
Let $D$ be a digraph on $n$ vertices with out-degree sequence $d_1^+,\dots ,d^+_n$. Then
$$\mathrm{allarb}(D)\leq \sum_{k=1}^n\prod_{j\neq k}d_j^+< \prod_{k=1}^n(d^+_k+1).$$
\end{Prop}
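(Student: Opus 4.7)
The plan is to obtain the first inequality by a trivial ``one out-edge per non-root vertex'' argument, and the second one by expanding the right-hand side combinatorially and noticing a strictly positive leftover term.

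First I would fix a root $v$ and observe that any arborescence rooted at $v$ assigns to every non-root vertex $u$ exactly one outgoing edge (the unique edge of the arborescence leaving $u$, which points toward $v$ along the tree). Forgetting the tree structure and just recording, for each $u\neq v$, which outgoing edge of $u$ is used, gives an injection from the set of arborescences rooted at $v$ into the set of functions that pick one out-edge at each vertex other than $v$. Hence, writing $v$ as the $k$-th vertex,
$$\arb(D,v)\leq \prod_{j\neq k}d_j^+.$$
Summing over all $n$ choices of root yields
$$\allarb(D)=\sum_{k=1}^n\arb(D,v_k)\leq \sum_{k=1}^n\prod_{j\neq k}d_j^+,$$
which is the first inequality.

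For the strict second inequality, I would expand the right-hand side:
$$\prod_{k=1}^n(d_k^++1)=\sum_{S\subseteq [n]}\prod_{j\in S}d_j^+.$$
The subsets $S$ of size exactly $n-1$ contribute precisely the sum $\sum_{k=1}^n\prod_{j\neq k}d_j^+$, and all remaining subsets contribute nonnegative terms. The empty set alone contributes $1$, so the difference between the two sides is at least $1$, giving strict inequality regardless of whether some $d_k^+$ vanishes.

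There is no real obstacle here: the only mild point to be careful about is that the ``pick one out-edge per non-root vertex'' map is well-defined and injective, which it is because a spanning tree is determined by its edge set and each non-root vertex contributes one edge to the tree. The strictness in the second inequality is robust precisely because the empty-set term in the binomial-type expansion always contributes $1$.
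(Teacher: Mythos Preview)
Your argument is correct and matches the paper's approach: the first inequality is exactly the paper's ``one out-edge per non-root vertex'' count, and your expansion of $\prod_k(d_k^++1)$ over subsets makes the strict second inequality (which the paper merely asserts) explicit via the empty-set term contributing $1$.
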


Next, we give an upper bound for $\arb(O)$ for Eulerian orientations of $K_n$ (for $n$ odd) that is tight for infinitely many values of $n$.
Let us start with a general upper bound for $\allarb(O)$ for any graph $G$, which is also tight in several cases.

\begin{Th}\label{conj:upper_bound} 
Let $D$ be simple directed graph on $n$ vertices and $m$ edges, with out-degree sequence $d^+_1,\ldots,d^+_n$.
Then
\begin{align*}
\allarb(D) \leq \left(\frac{1}{n-1}\right)^{\frac{n-1}{2}}\left(\sum_{i=1}^n (d^+_i)^2 + m\right)^{\frac{n-1}{2}}.
\end{align*}
In particular, if $G$ is a simple Eulerian graph with degree sequence $d_1,\dots ,d_n$, and $O$ is an Eulerian orientation of $G$, then we have
\begin{align*}
\arb(O) \leq \frac{1}{n}\left(\frac{1}{n-1}\right)^{\frac{n-1}{2}}\left(\frac{1}{4}\sum_{i=1}^n d_i^2 + m\right)^{\frac{n-1}{2}}.
\end{align*}
\end{Th}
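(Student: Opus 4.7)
The plan is to combine the directed matrix--tree theorem with a spectral AM--GM argument. Set $L = D^+ - A$, where $D^+=\mathrm{diag}(d_1^+,\ldots,d_n^+)$ and $A$ is the $(0,1)$-adjacency matrix of $D$. By Tutte's matrix--tree theorem, $\arb(D,v) = \det(L^{v,v})$, where $L^{v,v}$ denotes the principal $(n-1)\times(n-1)$ submatrix obtained by removing row and column $v$. The sum $\allarb(D) = \sum_v \det(L^{v,v})$ is, up to sign, the coefficient of $x$ in the characteristic polynomial $\det(xI - L) = \prod_i (x-\mu_i)$. Since $L\mathbf{1} = 0$ supplies a zero eigenvalue $\mu_1=0$, reading off this coefficient gives the identity
$$\allarb(D) = \prod_{j=2}^n \mu_j.$$
As $L$ is a real matrix, its non-real eigenvalues appear in complex-conjugate pairs, so the product on the right is a nonnegative real that coincides with $\prod_{j\geq 2} |\mu_j|$.

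Next I would apply AM--GM to the $n-1$ nonnegative reals $|\mu_j|^2$ together with Schur's inequality $\sum_j |\mu_j|^2 \leq \|L\|_F^2$, which follows from Schur triangularization $U^*LU = T$ (upper triangular with diagonal entries $\mu_j$) and unitary invariance of the Frobenius norm. These yield
$$\allarb(D) = \prod_{j=2}^n |\mu_j| \;\leq\; \left(\frac{\sum_{j=2}^n |\mu_j|^2}{n-1}\right)^{(n-1)/2} \;\leq\; \left(\frac{\|L\|_F^2}{n-1}\right)^{(n-1)/2}.$$
A direct entry-wise computation finishes the first inequality: since $L$ has $L_{ii}=d_i^+$ and $L_{ij}\in\{0,-1\}$ for $i\neq j$ in a simple digraph, the diagonal contributes $\sum_i (d_i^+)^2$ and each of the $m$ off-diagonal $-1$ entries contributes $1$, giving $\|L\|_F^2 = \sum_i (d_i^+)^2 + m$.

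For the Eulerian refinement, substitute $d_i^+ = d_i/2$ so that $\sum_i (d_i^+)^2 = \tfrac14 \sum_i d_i^2$, and invoke the BEST theorem (already recalled in the introduction) to conclude that $\arb(O,v)$ does not depend on $v$ for an Eulerian orientation $O$; hence $\allarb(O) = n\cdot \arb(O)$, and dividing the general bound by $n$ produces the second inequality. The main obstacle is that $L$ is neither symmetric nor normal, so its spectrum is genuinely complex in general. Both the identification $\allarb(D) = \prod_{j\geq 2}|\mu_j|$ (which rests on the conjugate-pair structure together with $\allarb(D)\geq 0$) and the bookkeeping step $\sum |\mu_j|^2 \leq \|L\|_F^2$ (Schur's inequality, tight only in the normal case) must be handled carefully; without the latter the non-normality of $L$ would destroy the bound.
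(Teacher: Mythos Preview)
Your proof is correct and follows essentially the same route as the paper: identify $\allarb(D)$ with the product of the non-zero Laplacian eigenvalues via the characteristic polynomial, pass to $\prod |\mu_j|$, apply the geometric--quadratic mean inequality, bound $\sum |\mu_j|^2$ by $\|L\|_F^2$ via Schur's inequality (the paper quotes this as a lemma from Horn--Johnson), and compute $\|L\|_F^2$ entrywise using simplicity. The Eulerian specialization is also handled identically.
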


Next we study tournaments on $n$ vertices, where $n$ is an odd integer. It turns out that for certain $n$ the so-called Hadamard tournaments will be the maximizing orientations. 

\begin{Def}
A tournament on $n$ vertices is an Hadamard tournament if its adjacency matrix $A$ satisfies $AA^T = \frac{n+1}{4}I+\frac{n-3}{4}J$ where $J$ is the $n\times n$ matrix with each entry being $1$. Note that Hadamard tournaments are sometimes referred to as doubly regular tournaments or homogeneous tournaments in the literature.
\end{Def}

\begin{figure}[H]
    \begin{tikzpicture}[nodes=state]
    \tikzstyle{every node}=[draw,shape=circle,inner sep=0,minimum size=2mm,fill=black]
    \tikzset{edge/.style = {->,> = latex}}
    \def \number {7}
    \def \radius {2cm}
    \def \degree {360/\number}
    \foreach \s in {1,...,\number}
    {
        \node at ({\degree * (\s -1)}:\radius) (\s) {};
    }
    \draw[-{Latex[length=5pt]}] (1) to (2);
    \draw[-{Latex[length=5pt]}] (1) to (3);
    \draw[-{Latex[length=5pt]}] (1) to (5);
    \draw[-{Latex[length=5pt]}] (2) to (3);
    \draw[-{Latex[length=5pt]}] (2) to (4);
    \draw[-{Latex[length=5pt]}] (2) to (6);
    \draw[-{Latex[length=5pt]}] (3) to (4);
    \draw[-{Latex[length=5pt]}] (3) to (5);
    \draw[-{Latex[length=5pt]}] (3) to (7);
    \draw[-{Latex[length=5pt]}] (5) to (2);
    \draw[-{Latex[length=5pt]}] (5) to (6);
    \draw[-{Latex[length=5pt]}] (5) to (7);
    \draw[-{Latex[length=5pt]}] (4) to (1);
    \draw[-{Latex[length=5pt]}] (4) to (5);
    \draw[-{Latex[length=5pt]}] (4) to (6);
    \draw[-{Latex[length=5pt]}] (6) to (1);
    \draw[-{Latex[length=5pt]}] (6) to (3);
    \draw[-{Latex[length=5pt]}] (6) to (7);
    \draw[-{Latex[length=5pt]}] (7) to (1);
    \draw[-{Latex[length=5pt]}] (7) to (2);
    \draw[-{Latex[length=5pt]}] (7) to (4);
    \end{tikzpicture}
    \caption{A Hadamard tournament, namely the Paley tournament of order $7$.}
\end{figure}
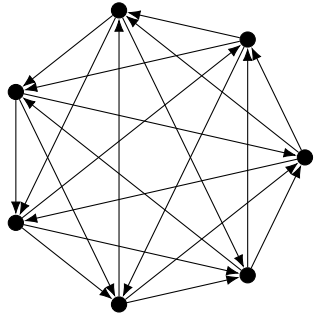

Recall that a $\pm 1$ matrix $H$ of size $N\times N$ is Hadamard if $H^TH=NI_N$. There is a simple connection between Hadamard matrices and tournaments. There is an Hadamard tournament on $n$ vertices if and only if there is a skew Hadamard matrix of size $n+1$ (see \cite{reid1972doubly}). It is known that the size of an Hadamard matrix is either $1,2$ or divisible by $4$ (see Theorem 18.1 of \cite{van2001course}), this means that for $n>1$ we can only expect an Hadamard tournament on $n$ vertices if $n\equiv 3\ (\bmod\ 4)$. 

Specializing Theorem~\ref{conj:upper_bound} to the complete graph $K_n$ we get an upper bound for the number of arborescences of Eulerian tournaments.
In this case we can also characterize the equality case.

\begin{Th}\label{upper_bound_for_K_n}
Let $T_n$ be an Eulerian tournament on $n$ vertices. Then
\begin{align*}
\arb(T_n)\leq \frac{1}{n}\left(\frac{n(n+1)}{4}\right)^{\frac{n-1}{2}}
\end{align*}
with equality if and only if $T_n$ is a Hadamard tournament.
\end{Th}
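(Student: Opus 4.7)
The inequality is a direct specialization of Theorem~\ref{conj:upper_bound} to the complete graph $K_n$. An Eulerian tournament $T_n$ has every vertex of degree $n-1$ and $m=\binom{n}{2}$ edges, so
\begin{align*}
\tfrac{1}{4}\sum_{i=1}^n d_i^2+m=\tfrac{n(n-1)^2}{4}+\tfrac{n(n-1)}{2}=\tfrac{n(n-1)(n+1)}{4};
\end{align*}
dividing by $n-1$ gives $\tfrac{n(n+1)}{4}$, and the stated bound follows.

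For the equality case I will unpack the expected proof of Theorem~\ref{conj:upper_bound}. By the directed Matrix--Tree theorem, $\arb(T_n)=\tfrac{1}{n}\prod_{i=1}^{n-1}\lambda_i$, where $\lambda_1,\dots,\lambda_{n-1}$ are the nonzero eigenvalues of the directed Laplacian $L=\tfrac{n-1}{2}I-A$. Two inequalities drive the bound: AM--GM,
\begin{align*}
\prod_{i=1}^{n-1}|\lambda_i|^2\le\Bigl(\tfrac{1}{n-1}\sum_{i=1}^{n-1}|\lambda_i|^2\Bigr)^{n-1},
\end{align*}
and Schur's inequality $\sum_i|\lambda_i|^2\le \mathrm{tr}(LL^T)=\sum_i(d_i^+)^2+m$. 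Equality forces (a) all nonzero $|\lambda_i|$ to coincide and (b) $L$ (equivalently $A$) to be normal.

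The key observation is that condition (b) is automatic for any regular tournament. Combining $A+A^T=J-I$ with $AJ=JA=\tfrac{n-1}{2}J$ yields
\begin{align*}
AA^T=A(J-I-A)=\tfrac{n-1}{2}J-A-A^2=(J-I-A)A=A^TA.
\end{align*}
Thus only (a) is substantive, and together with $L\mathbf{1}=L^T\mathbf{1}=0$ it is equivalent to the single matrix identity $LL^T=\tfrac{n+1}{4}(nI-J)$. Expanding $LL^T=\tfrac{(n-1)^2}{4}I-\tfrac{n-1}{2}(J-I)+AA^T$ and solving for $AA^T$ gives $AA^T=\tfrac{n+1}{4}I+\tfrac{n-3}{4}J$, the defining property of a Hadamard tournament. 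Conversely, for a Hadamard tournament the same identity shows $LL^T=\tfrac{n+1}{4}(nI-J)$, whose nonzero eigenvalues all equal $\tfrac{n(n+1)}{4}$, realizing equality.

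The main obstacle is the equality analysis: one has to track when each of the two inequalities is tight, and exploit the ``accidental'' normality of every regular tournament. This normality collapse is what pins down the Hadamard condition as the unique equality case, with no further structural hypothesis needed.
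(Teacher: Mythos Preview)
Your proof is correct, and the inequality part matches the paper exactly. For the equality characterization you take a genuinely different route than the paper.

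The paper argues as follows: equality in the quadratic--geometric mean forces all $|\lambda_i|$ to coincide; then Lemma~\ref{lem:skew_adj_of_K_n} (relating $L$ to the skew-symmetric adjacency matrix $M$) pins down the real part of every nonzero Laplacian eigenvalue as $n/2$, so the spectrum is exactly $\{0,\tfrac{n\pm i\sqrt{n}}{2}\}$; finally it invokes an external result of de la Pe\~na (a strongly connected tournament with exactly three eigenvalues is Hadamard).

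You instead observe that \emph{every} regular tournament has normal adjacency matrix, so Schur's inequality is automatically tight and $LL^T$ is computable from the spectrum. Equality in AM--GM then forces $LL^T=\tfrac{n+1}{4}(nI-J)$, and expanding $LL^T$ in terms of $AA^T$ recovers the defining identity $AA^T=\tfrac{n+1}{4}I+\tfrac{n-3}{4}J$ directly. This is more self-contained: it avoids both the detour through $M(T)$ and the citation to the three-eigenvalue classification, at the cost of the clean spectral picture the paper obtains along the way. The normality of regular tournaments is the key lemma that makes your shortcut work, and it is worth stating separately.
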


Next, let us study the minimization problem for the complete bipartite graph $K_{n,m}$. It turns out that in this case the minimization problem for $\mathrm{allarb}(O)$ among all orientations is trivial. Indeed, if a graph $G$ has two non-adjacent vertices, then the minimization problem for $\mathrm{allarb}(O)$ is trivial as orienting each incident edge toward these vertices will immediately imply that there are no arborescences in the obtained directed graph as there can be only one root, and any non-root vertex should have at least one out-going edge.
So among simple graphs this question is only non-trivial if there are no non-adjacent vertices, that is, when $G$ is a complete graph. In particular, for $K_{n,m}$ the minimal number of arborescences among all orientations is simply $0$. This means that only minimization among Eulerian orientations is worth considering.

\begin{Th} \label{lower_bound_K_nm}
Let $O$ be an Eulerian orientation of $K_{n,m}$, where $n$ and $m$ are an even integers. Then
\begin{align*}
\arb(O)\geq \left(\frac{m}{2}\right)^{n-1}\left(\frac{n}{2}\right)^{m-1}.
\end{align*}
The following orientation of $K_{n,m}$ achieves the lower bound and is the unique minimizer up to isomorphism: take an oriented $4$-cycle and blow up every second vertex with $\frac{n}{2}$ vertices and every second vertex with $\frac{m}{2}$ vertices.
\end{Th}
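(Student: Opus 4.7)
The plan is to apply the Matrix--Tree theorem to the directed Laplacian of $O$ and then carry out a spectral optimization. Let $A$, $B$ be the color classes of $K_{n,m}$ with $|A|=n$, $|B|=m$, and let $P\in\{0,1\}^{n\times m}$ be the indicator matrix of the edges oriented from $A$ to $B$. Eulerianness forces every row sum of $P$ to equal $m/2$ and every column sum to equal $n/2$, and the edges in the opposite direction form $Q=J-P^{T}$. The directed Laplacian
$$L=\begin{pmatrix}\tfrac{m}{2}I_n & -P\\ -Q & \tfrac{n}{2}I_m\end{pmatrix}$$
satisfies $(n+m)\arb(O)=\prod_{\lambda\neq 0}\lambda(L)$ by the Matrix--Tree theorem together with the BEST theorem (which gives $\arb(O,v)$ independent of $v$).

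I would then compute the spectrum of $L$ through a Schur complement. Assuming $m\ge n$ (the case $m<n$ is symmetric),
$$\det(xI-L)=(x-n/2)^{m-n}\,\det\!\bigl((x-m/2)(x-n/2)I_n-PQ\bigr),$$
and $PQ=(m/2)J_n-PP^{T}$ has eigenvalue $nm/4$ on $\mathbf{1}_n$ and eigenvalues $-\mu_2,\dots,-\mu_n$ on $\mathbf{1}_n^{\perp}$, where the $\mu_i\ge 0$ are the eigenvalues of $PP^{T}$ on $\mathbf{1}_n^{\perp}$. The orientation contributes precisely the two constraints $\mu_i\ge 0$ and $\sum_{i=2}^n\mu_i=\mathrm{tr}(PP^{T})-nm/4=nm/4$. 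Collecting the roots yields
$$\arb(O)=\tfrac{1}{2}\Bigl(\tfrac{n}{2}\Bigr)^{m-n}\prod_{i=2}^n\Bigl(\tfrac{nm}{4}+\mu_i\Bigr).$$

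The problem now reduces to minimizing $\prod_{i=2}^n(nm/4+\mu_i)$ on the simplex $\{\mu_i\ge 0,\ \sum\mu_i=nm/4\}$. Since $t\mapsto\log(nm/4+t)$ is strictly concave, the logarithm of the product is strictly concave on this simplex and therefore attains its minimum at an extreme point, where some $\mu_j=nm/4$ and every other $\mu_i$ vanishes. The minimum value is $(nm/2)(nm/4)^{n-2}=2(nm/4)^{n-1}$, and substituting back simplifies precisely to $\arb(O)\ge(m/2)^{n-1}(n/2)^{m-1}$.

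For the equality case, the strict concavity argument forces $PP^{T}$ to have spectrum $\{nm/4,nm/4,0,\dots,0\}$; equivalently, $P$ has rank $2$. The main obstacle is to translate this spectral rigidity into the combinatorial conclusion that $P$ is, up to relabelling, the indicator matrix of the blown-up oriented $4$-cycle. I would do this by picking any two linearly independent rows $r_a,r_b$ of $P$ and arguing that for any third row $r_c=\alpha r_a+\beta r_b$, the constraint that the entries lie in $\{0,1\}$ combined with the row-weight $m/2$ forces $(\alpha,\beta)\in\{(1,0),(0,1)\}$; a short case analysis on the patterns $(r_{aj},r_{bj})\in\{0,1\}^2$ handles the subcases $r_a\cdot r_b>0$ and $r_a\cdot r_b=0$ uniformly. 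Hence $P$ has only two distinct rows, and the constant column-sum condition $n/2$ then forces $\mathrm{supp}(r_a)$ and $\mathrm{supp}(r_b)$ to partition $B$ into two blocks of size $m/2$ with $n/2$ occurrences of each row type, which is exactly the claimed extremal orientation.
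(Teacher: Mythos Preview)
Your argument is correct and takes a genuinely different route from the paper. The paper does not compute the spectrum of $L$ directly; instead it introduces the auxiliary matrix
\[
S(D)=\begin{pmatrix}\tfrac{m}{2}I_n & B \\[2pt] -B^{T} & \tfrac{n}{2}I_m\end{pmatrix},\qquad B=\tfrac12 J_{n,m}-P,
\]
shows by a common-invariant-subspace argument that $\arb(D)=\tfrac{2}{nm}\det S(D)$, and then expands $\det S(D)$ by a Cauchy--Binet-type identity as $\sum_{k}\bigl(\tfrac{m}{2}\bigr)^{n-k}\bigl(\tfrac{n}{2}\bigr)^{m-k}\sum_{|S|=|T|=k}\det(B_{S,T})^{2}$. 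Every term is nonnegative and the $k\le 1$ terms already give the bound; equality forces $B$ to have rank~$1$, which is trivial to classify since $B$ is a $\pm\tfrac12$ matrix with zero row and column sums. This mirrors the paper's proof for~$K_n$, where a determinant is likewise expanded as a sum of nonnegative terms.

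Your Schur-complement computation followed by the concavity argument on the simplex is a clean alternative: it makes the full spectrum of $L$ explicit (the paper records this only in the square case $n=m$), and the minimization step is transparent. The trade-off appears in the equality analysis: the paper's rank-$1$ condition on $B$ is immediate to unwind, whereas you need a short combinatorial argument for the rank-$2$ condition on $P$. The two are equivalent via $\operatorname{rank}(J-2P)=\operatorname{rank}(P)-1$, which follows from the row/column-sum constraints. One small remark on your case split: the subcase $r_a\cdot r_b>0$ is in fact vacuous, since once every row is shown to equal $r_a$ or $r_b$, any overlap column would have column sum $n\neq n/2$; so only the disjoint-support case survives, and the column-sum argument then finishes exactly as you describe.
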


\begin{figure}[H]
\begin{tikzpicture}[nodes=state]
    \tikzstyle{every node}=[draw,shape=circle,inner sep=0,minimum size=2mm,fill=black]
    \tikzset{edge/.style = {->,> = latex}}
    \node at (0,1) (1) {};
    \node at (0,2) (2) {};
    \node at (0,3) (3) {};
    \node at (4,1) (4) {};
    \node at (4,2) (5) {};
    \node at (4,3) (6) {};
    \node at (1.5,0) (7) {};
    \node at (2.5,0) (8) {};
    \node at (1.5,4) (9) {};
    \node at (2.5,4) (10) {};
    \draw[-{Latex[length=5pt]}] (1) to (9);
    \draw[-{Latex[length=5pt]}] (1) to (10);
    \draw[-{Latex[length=5pt]}] (2) to (9);
    \draw[-{Latex[length=5pt]}] (2) to (10);
    \draw[-{Latex[length=5pt]}] (3) to (9);
    \draw[-{Latex[length=5pt]}] (3) to (10);
    \draw[-{Latex[length=5pt]}] (9) to (4);
    \draw[-{Latex[length=5pt]}] (10) to (4);
    \draw[-{Latex[length=5pt]}] (9) to (5);
    \draw[-{Latex[length=5pt]}] (10) to (5);
    \draw[-{Latex[length=5pt]}] (9) to (6);
    \draw[-{Latex[length=5pt]}] (10) to (6);
    \draw[-{Latex[length=5pt]}] (4) to (7);
    \draw[-{Latex[length=5pt]}] (4) to (8);
    \draw[-{Latex[length=5pt]}] (5) to (7);
    \draw[-{Latex[length=5pt]}] (5) to (8);
    \draw[-{Latex[length=5pt]}] (6) to (7);
    \draw[-{Latex[length=5pt]}] (6) to (8);
    \draw[-{Latex[length=5pt]}] (7) to (1);
    \draw[-{Latex[length=5pt]}] (7) to (2);
    \draw[-{Latex[length=5pt]}] (7) to (3);
    \draw[-{Latex[length=5pt]}] (8) to (1);
    \draw[-{Latex[length=5pt]}] (8) to (2);
    \draw[-{Latex[length=5pt]}] (8) to (3);

    \end{tikzpicture}
    \caption{The minimizing Eulerian orientation of $K_{4,6}$}
\end{figure}
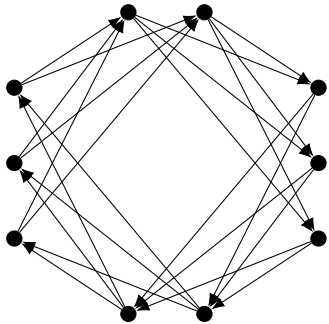

Our last result is the solution of the Eulerian minimization problem for the so-called double graphs. A graph $G$ is a double graph if there is an even number of edges between any two vertices. In this case we show that the following Eulerian orientation minimizes $\arb(O)$: for each $u,v\in V$, half the edges between $u$ and $v$ are oriented toward $u$, and half of them toward $v$. We call this orientation the symmetric orientation of $G$.

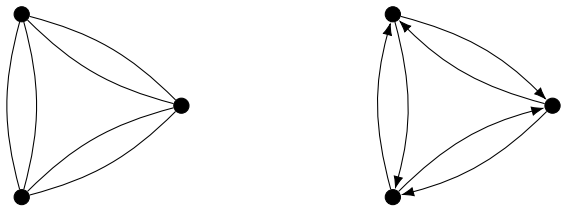
\begin{figure}[H]
    \centering
    \begin{subfigure}[t]{0.3\textwidth}
        \centering
        \begin{tikzpicture}[nodes=state,scale=.7]
        \tikzstyle{every node}=[draw,shape=circle,inner sep=0,minimum size=2mm,fill=black]
        \tikzset{edge/.style = {->,> = latex}}
        \def \number {3}
        \def \radius {2cm}
        \def \degree {360/\number}
        \foreach \s in {1,...,\number}
        {
            \node at ({\degree * (\s -1)}:\radius) (\s) {};
        }
        \foreach \s in {2,...,\number}
        {
            \pgfmathsetmacro\result{\s - 1}
            \foreach \i in {1,...,\result}
            {
                \draw[bend left=15] (\i) to (\s);
                \draw[bend left=15] (\s) to (\i);
            }
        }
        \end{tikzpicture}
    \end{subfigure}
    ~ 
    \begin{subfigure}[t]{0.3\textwidth}
        \centering
        \begin{tikzpicture}[nodes=state,scale=.7]
        \tikzstyle{every node}=[draw,shape=circle,inner sep=0,minimum size=2mm,fill=black]
        \tikzset{edge/.style = {->,> = latex}}
        \def \number {3}
        \def \radius {2cm}
        \def \degree {360/\number}
        \foreach \s in {1,...,\number}
        {
            \node at ({\degree * (\s -1)}:\radius) (\s) {};
        }
        \foreach \s in {2,...,\number}
        {
            \pgfmathsetmacro\result{\s - 1}
            \foreach \i in {1,...,\result}
            {
                \draw[-{Latex[length=5pt]}, bend left=15] (\i) to (\s);
                \draw[-{Latex[length=5pt]}, bend left=15] (\s) to (\i);
            }
        }
        \end{tikzpicture}
    \end{subfigure}
    \caption{A double graph on the left and its symmetric orientation on the right.}
    \label{fig:double_graph}
\end{figure}

\begin{Th}\label{thm:double_graph_minimizer}
Let $G$ be a connected double graph. Then the unique Eulerian orientation $O$ of $G$ minimizing $\arb(O)$ is the symmetric orientation.
\end{Th}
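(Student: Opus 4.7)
The plan is to reduce the problem to a determinant inequality for symmetric-plus-antisymmetric matrices via the Matrix--Tree theorem. First I would write $\arb(O) = \det(L_{O,v_0})$, where $L_O = D^+_O - A_O$ is the out-Laplacian of $O$, $A_O$ is its adjacency matrix, and $L_{O,v_0}$ denotes deletion of row and column $v_0$. Since $O$ is Eulerian, $D^+_O = \frac{1}{2} D_G$; and for the symmetric orientation $O_{\mathrm{sym}}$ we have $A_{O_{\mathrm{sym}}} = \frac{1}{2} A_G$, so $L_{O_{\mathrm{sym}}} = \frac{1}{2} L_G$ is half the ordinary (undirected) graph Laplacian. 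Hence $B := L_{O_{\mathrm{sym}},v_0} = \frac{1}{2} L_{G,v_0}$ is symmetric positive definite, since $G$ is connected.

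The key structural observation is that $L_O - L_{O_{\mathrm{sym}}} = \frac{1}{2}(A_O^T - A_O)$ is antisymmetric: reversing the arcs of $O$ produces the adjacency matrix $A_O^T$, and the symmetric orientation is exactly the average of $O$ with its reverse. Thus we may write $L_{O,v_0} = B + S$ with $B$ symmetric positive definite and $S$ antisymmetric of the same size, and the theorem reduces to the inequality $\det(B+S) \geq \det(B)$ with equality iff $S = 0$.

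To prove this lemma, factor $\det(B+S) = \det(B)\,\det(I+M)$ with $M := B^{-1/2} S B^{-1/2}$. Since $B^{-1/2}$ is symmetric, $M$ is antisymmetric, so its eigenvalues are purely imaginary and come in conjugate pairs $\pm i\lambda_k$ (plus possibly a zero when the dimension is odd). Hence $\det(I+M) = \prod_k (1+\lambda_k^2) \geq 1$, with equality iff $M = 0$, iff $S = 0$. Combining with the Matrix--Tree identity yields $\arb(O) \geq \arb(O_{\mathrm{sym}})$.

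For uniqueness, the equality $\arb(O) = \arb(O_{\mathrm{sym}})$ holds simultaneously for every root vertex $v_0$ (since $\arb$ is root-independent on Eulerian orientations by the BEST theorem), so the $(n-1)\times(n-1)$ principal submatrix of $A_O^T - A_O$ obtained by deleting row and column $v_0$ vanishes for every choice of $v_0$. For $n \geq 3$ any off-diagonal entry of the antisymmetric matrix $A_O^T - A_O$ lies inside some such submatrix, forcing $A_O = A_O^T$, i.e., $O = O_{\mathrm{sym}}$; the cases $n \leq 2$ are trivial. The only genuinely new ingredient is the determinant lemma above — conceptually it is the main obstacle, although it has a short proof once one has framed the problem as an antisymmetric perturbation of a positive definite matrix; the rest is a careful but routine accounting of how the symmetric orientation sits inside the affine space of Eulerian orientations.
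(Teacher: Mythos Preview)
Your proof is correct and follows essentially the same route as the paper: both write $L(O)_{\overline{v_0},\overline{v_0}}$ as a symmetric positive definite part $B=\tfrac{1}{2}L(G)_{\overline{v_0},\overline{v_0}}$ plus an antisymmetric part $S$, and then invoke the inequality $\det(B+S)\geq\det(B)$ with equality iff $S=0$. The paper quotes this as the Ostrowski--Taussky inequality, whereas you give a direct proof via the eigenvalues of $B^{-1/2}SB^{-1/2}$; your uniqueness argument (varying the root $v_0$) is a minor variant of the paper's (fixing one root and using that the row and column sums of $L(O)$ vanish), but both are fine.
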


\subsection{A geometric motivation}
\label{ss:geom_motivation}

One of the motivations for studying the number of arborescences in Eulerian orientations comes from geometry, more precisely, from studying the volumes of facets of the so-called symmetric edge polytope of a bipartite graph.

The symmetric edge polytope of a graph $G=(V,E)$ was defined by Matsui, Higashitani, Nagazawa, Ohsugi, and Hibi \cite{Sym_edge_appearance} as the following polytope:
\[\mathcal{Q}_G=\conv(\{\, \mathbf1_u-\mathbf1_v, \, \mathbf1_v-\mathbf1_u \mid uv\in E\,\})\subset\mathbb R^V\]
Here $\mathbf{1}_v$ denotes the vector where the coordinate corresponding to $v$ is 1, and the rest of the coordinates are 0.
This polytope has recently garnered considerable interest \cite{arithm_symedgepoly,DDM19,Sym_edge_appearance,OT,ChenDavis} due to its nice combinatorial properties. Moreover, it has connections to the Kuramoto synchronization model of physics, where its volume yields an upper bound for the number of steady states \cite{ChenDavis}. Recently, the symmetric edge polytope was also generalized to regular matroids \cite{Eulerian_greedoid,DAli}.

Let us call a matroid bipartite if each circuit has even cardinality.
It turns out (see \cite{deg_interior_poly}), that for a bipartite cographic matroid, the facets of the symmetric edge polytope correspond to Eulerian orientations of the dual (Eulerian) graph, and the volumes of these facets are the arborescence numbers of the corresponding Eulerian orientations. 

Hence finding Eulerian orientations of an Eulerian graph with minimal or maximal arborescence number corresponds to finding facets of minimal or maximal volume for the symmetric edge polytope of the cographic matroid.

\bigskip
\noindent \textbf{This paper is organized as follows.} 
In the next section we introduce the necessary tools to study the number of arborescences. In Section~\ref{sect:lower_bounds} we prove the lower bound results, that is, Theorems~\ref{lower_bound_K_n}, \ref{Eulerian_lower_bound_K_n}, \ref{lower_bound_K_nm} and \ref{thm:double_graph_minimizer}. In Section~\ref{sect:upper_bounds} we prove the upper bound results, Theorem~\ref{upper_bound_for_K_n} and Proposition~\ref{trivial_upper_bound}.
We end the paper with some concluding remarks and open problems.

\section{Preliminaries} \label{Preliminaries}

\noindent \textbf{Notation.} Throughout the paper $G=(V,E)$ denotes a graph, $D$ denotes a directed graph, an $O$ is an orientation of some undirected graph $G$. $K_n$ denotes the complete graph on $n$ vertices. $K_{n,m}$ denotes the complete bipartite graph with parts of size $n$ and $m$. The notation $[n]$ stands for $\{1,2,\dots, n\}$.

The matrix $I_k$ is the $k\times k$ identity matrix, the matrix $J_k$ is $k\times k$ matrix consisting only of $1$'s. If the size of the matrix is clear from the context, we drop the subscript. 
We denote by $\underline{0}$ the all-zero vector, and by $\underline{1}$ the all-one vector (we do not indicate the sizes as it will be clear from the context). 

For a matrix $M$ and $S,T\subseteq [n]$ the $M_{S,T}$ is the submatrix of $M$ with rows from $S$ and columns from $T$. We use $M_{\overline{i},\overline{j}}$ for $M_{[n]\setminus \{i\},[n]\setminus \{j\}}$, that is, for the matrix obtained from $M$ by deleting the $i^{th}$ row and $j^{th}$ column.
We denote the $i^{th}$ column of $M$ by $M_{*,i}$, and the $i^{th}$ row by $M_{i,*}$.

We denote the characteristic polynomial of matrix $M$ by $\varphi_M$. That is, $\varphi_M(x)=\det(xI-M)$. Recall that $\varphi_M(x)=\prod_{i=1}^n (x-\lambda_i)$ where $\lambda_1, \dots ,\lambda_n$ are the eigenvalues of $M$ with multiplicity.

The Frobenius norm of a matrix $M$ is denoted by $||M||_F$. Recall that $||M||_F=\sqrt{\sum_{i,j}M_{i,j}^2}$.
\bigskip

For a directed graph $D$, we denote by $d^+(v)$ the out-degree of vertex $v$, by $d^-(v)$ the in-degree of vertex $v$. We denote by $m(u,v)$ the number of directed edges pointing from $u$ to $v$. Similarly, for an undirected graph $G$ the degree of a vertex $v$ is denoted by $d(v)$ and $m(u,v)$ denotes the number of edges between $u$ and $v$.

Given a directed graph $D$ we can associate matrices to $D$ in many different ways. We will need the following three matrices: the adjacency matrix, the skew-symmetric adjacency matrix and the Laplacian matrix.

\begin{Def}
For a directed graph $D$, let $A(D)$ refer to the adjacency matrix of $D$, where $A(D)_{ij}=m(v_i,v_j)$,
the number of directed edges from $v_i$ to $v_j$.
\end{Def}

\begin{Def}
For a directed graph $D$, let $L(D)$ refer to the Laplacian matrix of $D$, defined as
\[
    L(D)_{ij} = \begin{cases} 
        d^+(v_i) & \text{if } i = j, \\
        -m(v_i,v_j) & \text{if } i \neq j.
    \end{cases}
\]
If $G$ is an undirected graph, then the Laplacian matrix $L(G)$ is defined by 
\[
    L(G)_{ij} = \begin{cases} 
        d(v_i) & \text{if } i = j, \\
        -m(v_i,v_j) & \text{if } i \neq j.
    \end{cases}
\]
\end{Def}

The third matrix is the skew-symmetric adjacency matrix, that is slightly special in that we only associate it to orientations of simple graphs.
\begin{Def}
For a directed graph $D$ with a simple underlying graph $G$, let $M(D)$ refer to the skew-symmetric adjacency matrix of $D$, where $M(D)_{ij}=1$ if there is a directed edge from $v_i$ to $v_j$, $-1$ if there is a directed edge from $v_j$ to $v_i$, and $0$ otherwise. In other words, $M(D)=A(D)-A(D)^T$. \end{Def}

The following result of Tutte is fundamental for us.

\begin{Th}[Tutte's Matrix-Tree theorem \cite{tutte1948dissection}]
The number of arborescences of a digraph $D$ rooted at vertex $v_k$ is equal to $\det(L(D)_{\overline{k},\overline{k}})$.
\end{Th}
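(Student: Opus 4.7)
The plan is to prove Tutte's Matrix-Tree theorem via the Cauchy--Binet formula, which is the standard route. First, enumerate the edges of $D$ as $e_1,\dots,e_m$ and introduce two $|V|\times |E|$ matrices. Define the signed incidence matrix $B$ by $B_{v,e}=1$ if $v$ is the tail of $e$, $B_{v,e}=-1$ if $v$ is the head of $e$, and $B_{v,e}=0$ otherwise, and define the out-incidence matrix $B^+$ by $B^+_{v,e}=1$ if $v$ is the tail of $e$ and $0$ otherwise. (Loops contribute nothing to arborescences and may be discarded at the outset.)

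A direct computation gives $L(D)=B^+B^T$: the diagonal entry $(B^+B^T)_{ii}$ counts edges with tail $v_i$ and yields $d^+(v_i)$, while for $i\neq j$ only edges with tail $v_i$ and head $v_j$ contribute, each giving $1\cdot(-1)$, so $(B^+B^T)_{ij}=-m(v_i,v_j)$. Deleting row and column $k$ and invoking the Cauchy--Binet formula then produces
$$\det(L(D)_{\overline{k},\overline{k}}) \;=\; \sum_{S} \det\bigl(B^+_{\overline{k},S}\bigr)\,\det\bigl(B_{\overline{k},S}\bigr),$$
where $S$ ranges over all $(n-1)$-element subsets of the edge set.

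The combinatorial heart of the proof is identifying the nonzero summands. Every column of $B^+_{\overline{k},S}$ contains at most one $1$, so $\det(B^+_{\overline{k},S})\neq 0$ forces each non-root vertex to be the tail of exactly one edge of $S$, in which case the determinant is $\pm 1$. Independently, by the classical incidence-matrix fact for undirected graphs, $\det(B_{\overline{k},S})=\pm 1$ precisely when $S$ spans a tree in the underlying undirected graph, and $0$ otherwise. Imposing both conditions simultaneously forces $S$ to be a spanning tree in which every non-root vertex has out-degree exactly $1$, that is, an arborescence rooted at $v_k$.

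The remaining subtlety, which I expect to be the main technical obstacle, is verifying that the product $\det(B^+_{\overline{k},S})\det(B_{\overline{k},S})$ equals $+1$ (not $-1$) for every arborescence $S$. I would handle this by a simultaneous permutation of rows and columns: index the non-root vertices $u_1,\dots,u_{n-1}$ by increasing distance from $v_k$ inside the arborescence $S$, and order the chosen columns so that the $i$-th column is the unique edge of $S$ leaving $u_i$. Then $B^+_{\overline{k},S}$ becomes the identity matrix, while $B_{\overline{k},S}$ becomes upper triangular with $1$'s on the diagonal, since the head of the edge leaving $u_i$ is either closer to $v_k$ (hence has smaller index) or is $v_k$ itself and does not appear. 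Both determinants are therefore $+1$, so each arborescence contributes exactly $+1$ to the sum. As an alternative, one could induct on $|E|$ using the deletion-contraction recursion $\arb(D,v_k)=\arb(D\setminus e,v_k)+\arb(D/e,v_k)$ for an edge $e$ not pointing into $v_k$, matched against cofactor expansion of $L(D)_{\overline{k},\overline{k}}$, but the Cauchy--Binet approach gives the cleanest proof.
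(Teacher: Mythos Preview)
Your argument is correct and is the standard Cauchy--Binet proof of Tutte's theorem. Note, however, that the paper does not supply its own proof of this statement: it is quoted as a classical result with a citation to Tutte~\cite{tutte1948dissection} and used as a black box thereafter. So there is no ``paper's proof'' to compare against; you have simply filled in a proof that the authors chose to omit. Your handling of the sign issue via simultaneous reordering of rows and columns (so that the product of the two determinants is invariant, and in the chosen ordering both matrices are visibly upper triangular with unit diagonal) is clean and complete.
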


\noindent This is the counterpart of Kirchhoff's classical matrix tree theorem to directed graphs.

\begin{Th}[Kirchhoff's Matrix-Tree theorem]
The number of spanning trees of an undirected graph $G$ is equal to $\det(L(G)_{\overline{n},\overline{n}})$.
\end{Th}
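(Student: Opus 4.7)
The plan is to derive Kirchhoff's Matrix-Tree theorem as an immediate consequence of Tutte's directed version, which was stated just above in the excerpt and may be taken as a black box. Given the undirected graph $G=(V,E)$, I would associate to it the directed graph $D$ obtained by replacing each undirected edge $uv\in E$ with two oppositely oriented edges $(u,v)$ and $(v,u)$. A direct check against the definitions shows $L(D)=L(G)$ as matrices: each vertex $v$ satisfies $d^+_D(v)=d_G(v)$, and the off-diagonal entries agree since $m_D(u,v)=m_G(u,v)$ for $u\neq v$. In particular, $\det(L(D)_{\overline{n},\overline{n}})=\det(L(G)_{\overline{n},\overline{n}})$.

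The remaining step is to exhibit a bijection between spanning trees of $G$ and arborescences of $D$ rooted at $v_n$. Given a spanning tree $T$ of $G$, orient each edge of $T$ toward $v_n$ along the unique $T$-path; the resulting set of $n-1$ directed edges lies in $D$ (because $D$ contains both orientations of every edge of $G$), every non-root vertex gets out-degree exactly $1$, and the underlying graph is connected and acyclic, so this is an arborescence of $D$ rooted at $v_n$. Conversely, forgetting orientations in an arborescence of $D$ rooted at $v_n$ produces a spanning tree of $G$, and this is inverse to the previous construction since in an arborescence each non-root vertex determines a unique out-edge. Combining the bijection with Tutte's theorem applied to $D$ at the root $v_n$ yields $\det(L(G)_{\overline{n},\overline{n}})=\det(L(D)_{\overline{n},\overline{n}})=\arb(D,v_n)$, which equals the number of spanning trees of $G$.

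There is no real obstacle here; the one point worth verifying, namely that no arborescence of $D$ uses both orientations of some edge of $G$, is automatic because each non-root vertex has out-degree exactly $1$ in an arborescence. An alternative route would go through the Cauchy--Binet formula applied to $L(G)=BB^T$ for an arbitrary orientation $B$ of the signed incidence matrix, but the reduction to Tutte's theorem above is cleaner given what the paper has already set up.
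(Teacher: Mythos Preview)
The paper does not actually prove Kirchhoff's Matrix-Tree theorem; it merely states it as a classical result alongside Tutte's directed version, and uses both as black boxes. So there is no ``paper's own proof'' to compare against.

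Your reduction to Tutte's theorem is correct and is the natural argument given what the paper has set up. One small remark: your justification that an arborescence of $D$ cannot use both $(u,v)$ and $(v,u)$ (``because each non-root vertex has out-degree exactly $1$'') is not quite the right reason on its own---out-degree $1$ at both $u$ and $v$ is compatible with choosing precisely those two edges. The actual reason is that the paper's definition of an arborescence requires the underlying graph to be a spanning tree, which directly rules out the $2$-cycle; alternatively, one can argue that if $u$ and $v$ each spend their unique out-edge on the other, neither can reach the root. Either way the conclusion stands and the bijection is valid.
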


$\det(L(D)_{\overline{k},\overline{k}})$ can be written as the product of the eigenvalues of $L(D)_{\overline{k},\overline{k}}$, but it turns out that it is more convenient to work with the eigenvalues of $L(D)$ instead. Lemma \ref{lem:product_of_nonzero_eigenvalues} below shows that the eigenvalues of $L(D)$ also give us meaningful input for studying the number of arborescences.

Concerning the eigenvalues of $L(D)$ one needs to be a bit careful. Unlike in the case of undirected graphs (where $L(G)$ is a symmetric positive semidefinite matrix) for digraphs it is not true anymore that the eigenvalues are real or that the matrix is positive semidefinite. It might even occur that $L(D)$ is not diagonalizable, that is, there is no basis consisting of eigenvectors of $L(D)$. Nevertheless, it is true that $L(D)\underline{1}=\underline{0}$ and we will refer to the corresponding $0$ eigenvalue as $\lambda_n$. Among the eigenvalues $\lambda_1,\dots ,\lambda_{n-1}$ there can be another $0$, but as the next lemma shows it can only occur if there is no arborescence in the digraph $D$.

\begin{Lemma}\label{lem:product_of_nonzero_eigenvalues}
For any digraph $D$ on $n$ vertices we have
\begin{equation}
\allarb(D)=\prod_{i=1}^{n-1} \lambda_{i} 
\end{equation}
where $\lambda_{1},\dots , \lambda_{n-1}$ are the eigenvalues of the Laplacian matrix different from $\lambda_n=0$.
\end{Lemma}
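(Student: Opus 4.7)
The plan is to combine Tutte's Matrix-Tree Theorem with the standard relationship between principal minors of a matrix and the coefficients of its characteristic polynomial.

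First I would apply Tutte's theorem to each vertex and sum, writing
\begin{equation*}
\allarb(D) \;=\; \sum_{k=1}^{n} \arb(D,v_k) \;=\; \sum_{k=1}^{n} \det\bigl(L(D)_{\overline{k},\overline{k}}\bigr).
\end{equation*}
So the task reduces to evaluating the sum of all $(n-1)\times(n-1)$ principal minors of $L(D)$ in terms of the eigenvalues of $L(D)$.

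The key algebraic fact I would invoke is that for any $n\times n$ matrix $M$ the characteristic polynomial factors as
\begin{equation*}
\varphi_M(x)=\det(xI-M)=\prod_{i=1}^{n}(x-\lambda_i)=\sum_{k=0}^{n}(-1)^{n-k} e_{n-k}(\lambda_1,\dots,\lambda_n)\, x^k,
\end{equation*}
while on the other hand a cofactor-expansion argument shows that the coefficient of $x^k$ in $\det(xI-M)$ equals $(-1)^{n-k}$ times the sum of all $(n-k)\times(n-k)$ principal minors of $M$. Comparing the coefficient of $x^1$ therefore yields
\begin{equation*}
\sum_{k=1}^{n}\det\bigl(M_{\overline{k},\overline{k}}\bigr)\;=\;e_{n-1}(\lambda_1,\dots,\lambda_n)\;=\;\sum_{i=1}^{n}\prod_{j\neq i}\lambda_j.
\end{equation*}

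Applying this identity to $M=L(D)$ and using that $L(D)\underline{1}=\underline{0}$, so that one eigenvalue $\lambda_n$ equals $0$, every term of the sum $\sum_i \prod_{j\neq i}\lambda_j$ vanishes except the one corresponding to $i=n$, leaving exactly $\prod_{i=1}^{n-1}\lambda_i$. Chaining this with the first display gives the desired equality. The argument also gracefully covers the degenerate case when some other $\lambda_i$ is zero: then the right-hand side is $0$, while each principal minor $\det(L(D)_{\overline{k},\overline{k}})$ must be zero as well (by Tutte's theorem, this means $D$ has no arborescence rooted at any vertex), so both sides agree.

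The only mild subtlety — and really the main point one must be careful about — is that $L(D)$ need not be diagonalizable, so one should interpret the $\lambda_i$ as the roots of $\varphi_{L(D)}$ counted with algebraic multiplicity. This does not affect the proof, since the identity relating principal minors to elementary symmetric functions of the eigenvalues holds at the level of the characteristic polynomial regardless of diagonalizability.
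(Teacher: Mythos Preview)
Your proof is correct and essentially identical to the paper's own argument: both combine Tutte's Matrix-Tree theorem with the fact that the coefficient of $x$ in $\varphi_{L(D)}(x)$ equals both $\sum_k \det(L(D)_{\overline{k},\overline{k}})$ (via cofactor expansion) and $e_{n-1}(\lambda_1,\dots,\lambda_n)$ (via Vi\`ete's formula), and then use $\lambda_n=0$ to collapse the latter to $\prod_{i=1}^{n-1}\lambda_i$. Your closing remark on the degenerate case is superfluous, since the identity already follows from the general argument without singling out that situation.
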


\begin{proof}
Take
\begin{align*}
\varphi_{L(D)}(x)= \prod\limits_{i=1}^{n} (x-\lambda_{i}) = x^{n} - a_{n-1} x^{n-1} + a_{n-2} x^{n-2} - \cdots +(-1)^{n-1} a_{1}x.
\end{align*}
Then, according to Vi\'ete's formula
\begin{align*}
a_{1} = \lambda_{2} \lambda_{3} \cdots \lambda_{n} + \lambda_{1} \lambda_{3} \cdots \lambda_{n} + \cdots + \lambda_{1} \lambda_{2} \cdots \lambda_{n-1}.
\end{align*} 
As $\lambda_{n} = 0$ we get $a_{1} = \prod_{i=1}^{n-1} \lambda_{i}$. Furthermore, by taking the cofactor expansion of $\varphi_{L(D)}(x)=\det(xI -L(G))$, we obtain the coefficient $a_{1} = \sum\limits_{k=1}^{n} \det(L(D)_{\overline{k},\overline{k}}) = \sum\limits_{r \in V(D)} \text{arb} (D,r)$ due to Tutte's Matrix-Tree theorem. Thus, $\prod\limits_{k=1}^{n-1}\lambda_{k} = \sum\limits_{r \in V(D)} \text{arb} (D,r)$.
\end{proof}

The following corollary of Lemma~\ref{lem:product_of_nonzero_eigenvalues} will be a key tool for us.

\begin{Cor}\label{cor:product_of_nonzero_eigenvalues_eulerian}
Let $D$ be an Eulerian digraph on $n$ vertices. Since $D$ is Eulerian, we have $\arb(D, u) = \arb(D,v)$ for all $u,v\in V(D)$. Thus
$$\arb(D)=\frac{1}{n}\prod_{i=1}^{n-1} \lambda_i.$$
\end{Cor}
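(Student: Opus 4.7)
The plan is to derive the corollary as an essentially one-line consequence of Lemma~\ref{lem:product_of_nonzero_eigenvalues}, combined with the fact (already noted in the introduction and attributed to the BEST theorem) that for an Eulerian digraph $D$ the quantity $\arb(D,v)$ does not depend on the choice of root $v$. So almost all of the content of the statement is already packaged into the preceding lemma, and I just need to translate between $\allarb(D)$ and $\arb(D)$.

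Concretely, I would begin by recalling the definition $\allarb(D)=\sum_{v\in V(D)}\arb(D,v)$. Under the Eulerian hypothesis, $\arb(D,v)$ takes the same value $\arb(D)$ at every vertex, so the sum collapses to
\[
\allarb(D) \;=\; n\cdot \arb(D).
\]
Then Lemma~\ref{lem:product_of_nonzero_eigenvalues} supplies
\[
\allarb(D) \;=\; \prod_{i=1}^{n-1}\lambda_i,
\]
and dividing by $n$ gives exactly the claimed identity $\arb(D)=\frac{1}{n}\prod_{i=1}^{n-1}\lambda_i$.

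The only step with any content is the Eulerian-root-invariance $\arb(D,u)=\arb(D,v)$, which the paper has already attributed to the BEST theorem in the introduction, so within this section it can simply be invoked. If one wished to make the argument self-contained at this point, one could observe that the BEST theorem expresses the number of Eulerian circuits of $D$ as $\arb(D,v)\prod_w(d^+(w)-1)!$, and since the number of Eulerian circuits (say, counted up to cyclic rotation of the edge sequence) is intrinsic to $D$ and does not depend on the starting vertex $v$, while the product of factorials is also independent of $v$, the factor $\arb(D,v)$ must itself be independent of $v$. I do not expect any technical obstacle here; the content of the corollary is purely bookkeeping on top of Lemma~\ref{lem:product_of_nonzero_eigenvalues}.
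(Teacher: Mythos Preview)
Your proposal is correct and matches the paper's approach exactly: the corollary is stated in the paper as an immediate consequence of Lemma~\ref{lem:product_of_nonzero_eigenvalues} together with the root-invariance of $\arb(D,v)$ for Eulerian digraphs, with no separate proof given beyond the reasoning embedded in the statement itself. Your write-up simply makes that implicit one-line argument explicit.
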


\begin{comment}
\begin{Lemma}
Let $D$ be an Eulerian digraph, and let $L(D)\underline{v}=\lambda \underline{v}$, where $\lambda \neq 0$. Then $(\underline{1},\underline{v})=0$. 
\end{Lemma}

\begin{proof}
Since $D$ is Eulerian we have $\underline{1}^TL(D)=\underline{0}$, and so
$$0=\underline{0}^T\underline{v}=\underline{1}^TL(D)\underline{v}=\underline{1}^T\lambda \underline{v}=\lambda (\underline{1},\underline{v}).$$
Since $\lambda\neq 0$ we have $(\underline{1},\underline{v})=0$.
\end{proof}
\end{comment}

\begin{Lemma} \label{J-perturbation}
Let $D$ be a digraph. Then
$$\det(L(D)+\alpha J_n)=n\alpha  \cdot \allarb(D).$$
\end{Lemma}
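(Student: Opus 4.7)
The plan is to apply the matrix determinant lemma in its general form
$$\det(A + uv^T) = \det(A) + v^T\operatorname{adj}(A)\,u,$$
which holds for any square matrix $A$ (no invertibility assumption needed), to the rank-one decomposition $\alpha J_n = (\alpha\underline{1})\underline{1}^T$. With $A = L(D)$, $u = \alpha\underline{1}$ and $v = \underline{1}$, the identity $L(D)\underline{1} = \underline{0}$ gives $\det L(D) = 0$, so the claim reduces to showing
$$\alpha\,\underline{1}^T\operatorname{adj}(L(D))\,\underline{1} \;=\; \alpha\!\sum_{i,j=1}^n\!\operatorname{adj}(L(D))_{ij} \;=\; n\alpha\cdot\allarb(D).$$

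To evaluate the double sum I would rely on the standard observation that each column of $\operatorname{adj}(L(D))$ is a constant multiple of $\underline{1}$. This follows from $L(D)\cdot\operatorname{adj}(L(D)) = (\det L(D))\,I = 0$, which forces every column of $\operatorname{adj}(L(D))$ to lie in $\ker L(D)$. If $\operatorname{rank} L(D) = n-1$, then $\ker L(D) = \operatorname{span}(\underline{1})$, so column $j$ of the adjugate is a scalar multiple of $\underline{1}$; if $\operatorname{rank} L(D) \le n-2$, then all $(n{-}1)\times(n{-}1)$ minors of $L(D)$ vanish and $\operatorname{adj}(L(D)) = 0$ identically. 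In either case $\operatorname{adj}(L(D))_{ij}$ depends only on $j$ and equals its diagonal value
$$\operatorname{adj}(L(D))_{jj} \;=\; \det\bigl(L(D)_{\overline{j},\overline{j}}\bigr) \;=\; \arb(D,v_j)$$
by Tutte's Matrix-Tree theorem.

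Summing then gives $\sum_{i,j}\operatorname{adj}(L(D))_{ij} = \sum_j n\cdot\arb(D,v_j) = n\cdot\allarb(D)$, which combined with the matrix determinant lemma yields the desired identity. There is no substantial obstacle; the only point deserving care is the rank dichotomy, which keeps the column-constant adjugate argument valid for digraphs admitting no arborescence at all. An equivalent route, if one prefers to avoid invoking the matrix determinant lemma, is to add columns $2,\dots,n$ of $L(D) + \alpha J_n$ into the first (which becomes $n\alpha\,\underline{1}$ because the rows of $L(D)$ sum to zero), factor out $n\alpha$, then subtract $\alpha$ times the first column from each of columns $2,\dots,n$ to recover the matrix $[\underline{1}\mid L(D)_{*,2}\mid\cdots\mid L(D)_{*,n}]$, and expand along the first column; the same cofactor identity then finishes the job.
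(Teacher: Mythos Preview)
Your proof is correct and takes a genuinely different route from the paper. The paper argues via eigenvalues: it observes that $L(D)$ and $L(D)+\alpha J_n$ agree as linear maps on $\langle\underline{1}\rangle^{\perp}$, while on $\underline{1}$ the eigenvalue changes from $0$ to $n\alpha$; writing both operators in a basis consisting of $\underline{1}$ together with a basis of $\langle\underline{1}\rangle^{\perp}$ shows their characteristic polynomials share all roots except for this swap, and then Lemma~\ref{lem:product_of_nonzero_eigenvalues} identifies the product of the remaining eigenvalues with $\allarb(D)$. Your argument is purely algebraic: the matrix determinant lemma plus the observation that the columns of $\operatorname{adj}(L(D))$ are constant (from $L(D)\operatorname{adj}(L(D))=0$ and the rank dichotomy) reduces everything to Tutte's theorem directly, with no eigenvalue machinery and no appeal to Lemma~\ref{lem:product_of_nonzero_eigenvalues}. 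The paper's approach has the advantage of fitting the eigenvalue framework used throughout; yours is more self-contained and handles the degenerate case ($\operatorname{rank} L(D)\le n-2$) more cleanly. Your alternative column-operation route is also valid, and in fact the cofactor identity $(-1)^{i+j}\det(L(D)_{\overline{i},\overline{j}})=\det(L(D)_{\overline{i},\overline{i}})$ it ultimately rests on is exactly the constant-column property of the adjugate you already established.
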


\begin{proof}
For the all-$1$ vector $\underline{1}$ we have $L(D)\underline{1}=\underline{0}$ and $(L(D)+\alpha J_n)\underline{1}=n\alpha \underline{1}$. Let $V=\langle \underline{1}\rangle^{\perp}$. For a vector $\underline{v}\in V$ we have $L(D)\underline{v}=(L(D)+\alpha J_n)\underline{v}$ showing that the remaining eigenvalues of $L(D)$ and $L(D)+\alpha J_n$ are the same. By writing up the two linear maps in a  basis of $V$ together with $\underline{1}$ they take the form of a block matrix 
$\left(\begin{array}{cc} c & \underline{u} \\
\underline{0} & M
\end{array}\right)$, where $c=0$ for $L(D)$ and $n\alpha$ for $L(D)+\alpha J_n$, respectively. So the rest of the eigenvalues are the eigenvalues of $M$ for both matrices. If $R$ denotes the multiset of these eigenvalues, then by Lemma~\ref{lem:product_of_nonzero_eigenvalues} we have
$$n\alpha\cdot \allarb(D)=n\alpha \prod_{\lambda \in R}\lambda=\det(L(D)+\alpha J_n).$$
\end{proof}

The following lemma is well-known, see Theorem 2.5.3 and its proof in \cite{horn2012matrix}. 

\begin{Lemma} \label{Frobenius}
Let $A\in \mathbb{C}^{n\times n}$ with eigenvalues $\lambda_1,\dots ,\lambda_n$. Then
$$\sum_{i=1}^n|\lambda_i|^2\leq \sum_{i,j}|A_{i,j}|^2$$
with equality if and only if $A$ is normal, that is, $A^*A=AA^*$.
\end{Lemma}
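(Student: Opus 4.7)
The plan is to reduce to the upper-triangular case via Schur's triangularization theorem. By that classical result, any matrix $A\in\mathbb{C}^{n\times n}$ is unitarily similar to an upper triangular matrix: there exist a unitary $U$ and an upper triangular $T$ such that $A=UTU^{*}$, with the diagonal entries of $T$ being precisely the eigenvalues $\lambda_1,\dots,\lambda_n$ of $A$ (with multiplicity).

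The next key step is to observe that the Frobenius norm is invariant under unitary similarity. This follows because $\|A\|_F^2=\mathrm{tr}(A^{*}A)$, and
\[
\mathrm{tr}(A^{*}A)=\mathrm{tr}\bigl((UTU^{*})^{*}(UTU^{*})\bigr)=\mathrm{tr}(UT^{*}TU^{*})=\mathrm{tr}(T^{*}T)=\|T\|_F^2,
\]
using the cyclic property of the trace and $U^{*}U=I$.

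Writing out the Frobenius norm of the upper triangular matrix $T$, I would split it into diagonal and strictly upper triangular contributions:
\[
\|T\|_F^2=\sum_{i=1}^{n}|\lambda_i|^{2}+\sum_{i<j}|T_{ij}|^{2}\ \geq\ \sum_{i=1}^{n}|\lambda_i|^{2},
\]
which is exactly the claimed inequality, since the left-hand side equals $\|A\|_F^2=\sum_{i,j}|A_{ij}|^{2}$.

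Finally, for the equality case, equality forces $T_{ij}=0$ for all $i<j$, i.e.\ $T$ is diagonal. Thus $A=UTU^{*}$ is unitarily diagonalizable, which is equivalent to $A$ being normal ($A^{*}A=AA^{*}$). Conversely, if $A$ is normal, then Schur's theorem produces a diagonal $T$, and the inequality becomes an equality. The only obstacle, which is really just an appeal to known theory, is invoking the spectral characterization of normal matrices (normal $\Longleftrightarrow$ unitarily diagonalizable); everything else is a direct computation with the Frobenius norm.
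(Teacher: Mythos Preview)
Your proof is correct and is precisely the standard Schur-triangularization argument. The paper itself does not give a proof of this lemma but merely cites it as well known (Theorem 2.5.3 of Horn--Johnson), so your write-up in fact supplies the details that the paper omits; the argument you give is essentially the one found in that reference.
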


Finally, we will use the following basic facts about skew-symmetric matrices.

\begin{Lemma}[\cite{cayley1849determinants,ledermann1993note}]\label{skew-symmetric}
Let $M\in \mathbb{C}^{n\times n}$ be a skew-symmetric matrix, that is, $M^*=-M$. \\
(i) Then $M$ is normal, thus, $M$ has an orthonormal basis of eigenvectors. \\
(ii) The eigenvalues of $M$ are of the form $\alpha_1i,-\alpha_1i,\alpha_2i,-\alpha_2i,\dots$, where $\alpha_1,\alpha_2,\dots\in \mathbb{R}$. \\
(iii) $0$ is an eigenvalue of $M$ if $n$ is odd, and so $\det(M)=0$ in this case.\\
(iv) The determinant of $M$ is non-negative. In fact, the determinant is the square of the Pfaffian, and so $\det(M)$ is a perfect square if $M$ has only integer values.
\end{Lemma}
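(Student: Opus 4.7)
The plan is to verify the four claims in turn using only the defining relation $M^*=-M$ together with the spectral theorem for normal matrices; the statements are classical so there is no real obstacle, and I expect the main work to amount to a careful bookkeeping of eigenvalues.

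For (i), I would simply compute $M^*M=(-M)M=-M^2$ and $MM^*=M(-M)=-M^2$, so $M^*M=MM^*$. Normality then gives an orthonormal eigenbasis by the spectral theorem. For (ii), I start with the ``purely imaginary'' part: if $Mv=\lambda v$ with $v\neq 0$, then taking the conjugate transpose yields $v^*M^*=\bar\lambda v^*$, hence
\[
\bar\lambda \|v\|^2=v^*M^*v=-v^*Mv=-\lambda\|v\|^2,
\]
so $\bar\lambda=-\lambda$ and $\lambda\in i\mathbb{R}$. For the $\pm$ pairing into $\{\alpha_k i,-\alpha_k i\}$, I use that in all applications in this paper $M=M(D)$ is a real matrix, so the characteristic polynomial has real coefficients and therefore non-real eigenvalues occur in complex-conjugate pairs; combined with the purely imaginary conclusion, this gives exactly the stated form.

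For (iii), with the pairing from (ii) the nonzero eigenvalues come in an even number, so if $n$ is odd at least one eigenvalue must vanish, and then $\det(M)=\prod_i \lambda_i=0$. (As a sanity check in the real case one may also just compute $\det(M)=\det(M^T)=\det(-M)=(-1)^n\det(M)$.) For (iv), grouping the eigenvalues by (ii) gives
\[
\det(M)=\prod_k (\alpha_k i)(-\alpha_k i)=\prod_k \alpha_k^2\geq 0.
\]
For the Pfaffian assertion I would invoke the classical Cayley identity $\det(M)=\mathrm{Pf}(M)^2$ (this is the result cited in \cite{cayley1849determinants,ledermann1993note}); since the Pfaffian is an integer polynomial in the entries of $M$, it takes integer values on integer skew-symmetric matrices, so $\det(M)$ is then a perfect square. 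The only subtle point, which I would flag briefly, is the consistency of the convention $M^*=-M$ with the real skew-symmetric matrices actually used later in the paper; since in that setting $M^*=M^T$, both conventions agree and no issue arises.
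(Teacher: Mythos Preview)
The paper does not prove this lemma; it is stated with citations to \cite{cayley1849determinants,ledermann1993note} and used as a black box. Your argument is the standard one and is correct. You are also right to flag the one genuine subtlety: as literally stated for arbitrary $M\in\mathbb{C}^{n\times n}$ with $M^*=-M$, the $\pm$ pairing in (ii) need not hold (e.g.\ the diagonal matrix with entries $i,2i$ is skew-Hermitian but its eigenvalues are not paired), and the Pfaffian identity in (iv) is really a statement about matrices with $M^T=-M$. Your resolution---restricting to the real skew-symmetric matrices $M(D)$ that the paper actually uses, for which $M^*=M^T$ and the characteristic polynomial has real coefficients---is exactly the right move and matches how the lemma is applied throughout the paper.
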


\section{Lower bounds} \label{sect:lower_bounds}

In this section we prove Theorems~\ref{lower_bound_K_n}, \ref{Eulerian_lower_bound_K_n}, \ref{transitive_tournament} and \ref{lower_bound_K_nm}.

\subsection{Lower bounds for tournaments}

\begin{proof}[Proof of Theorem~\ref{lower_bound_K_n}]
By Lemma~\ref{J-perturbation} we know that
$$\det(L(T)+\alpha J_n)=n\alpha \cdot \allarb(T).$$
Let us consider the matrix $2L(T)+J_n$. First observe that
$$\det(2L(T)+J_n)=2^n\det\left(L(T)+\frac{1}{2}J_n\right)=2^{n-1}n\cdot \allarb(T).$$
Furthermore, $2L(T)+J_n=D(T)-M(T)$, where $D(T)$ is the diagonal matrix consisting of the elements $2d^+_i+1$, and $M(T)$ is the skew-symmetric adjacency matrix of $T$. Hence
$$\det(2L(T)+J_n)=\det(D(T)-M(T))=\sum_{S\subseteq [n]}\left(\prod_{k\in S}(2d^+_k+1)\right)(-1)^{n-|S|}\det(M(T)_{S^c,S^c}),$$
where $S^c=[n]\setminus S$.
Here $M(T)_{S^c,S^c}$ is a skew-symmetric matrix. If it has odd size, then its determinant is $0$ according to Lemma~\ref{skew-symmetric}(iii). If it has even size, then its determinant is non-negative according to Lemma~\ref{skew-symmetric}(iv). Furthermore, it is an integer since all entries are integer. If $|S^c|=2k$, then the parity of the determinant of $M(T)_{S^c,S^c}$ is the same as the parity of the determinant of $J_{2k}-I_{2k}$ since the two matrices have the same parity entrywise and the determinant is a polynomial of the entries with integer coefficients. The eigenvalues of the latter matrix are $2k-1$ with multiplicity 1 with eigenvector $\underline{1}$, and $-1$ with multiplicity $2k-1$ with the eigenspace being all vectors orthogonal to $\underline{1}$, so $\det(J_{2k}-I_{2k})=(2k-1)(-1)^{2k-1}=-(2k-1)$ is odd. This means that $\det(M(T)_{S^c,S^c})$ is a non-negative odd number, that is, $\det(M(T)_{S^c,S^c})\geq 1$. Hence
$$\sum_{S\subseteq [n]}\left(\prod_{k\in S}(2d^+_k+1)\right)(-1)^{n-|S|}\det(M(T)_{S^c,S^c})\geq \sum_{S\subseteq [n] \atop |S|\equiv n\ (2)}\prod_{k\in S}(2d^+_k+1).$$
Observe that
\begin{align*}
\sum_{S\subseteq [n] \atop |S|\equiv n\ (2)}\prod_{k\in S}(2d^+_k+1)&=\frac{1}{2}\left(\prod_{k=1}^n((2d^+_k+1)+1)+\prod_{k=1}^n((2d^+_k+1)-1)\right)\\
&=2^{n-1}\left(\prod_{k=1}^n(d^+_k+1)+\prod_{k=1}^nd^+_k\right).
\end{align*}
Putting all these together, we get that 
$$2^{n-1}n\cdot \allarb(T)=\det(2L(T)+J)\geq 2^{n-1}\left(\prod_{k=1}^n(d^+_k+1)+\prod_{k=1}^nd^+_k\right).$$
Hence 
$$\allarb(T)\geq \frac{1}{n}\left(\prod_{k=1}^n(d^+_k+1)+\prod_{k=1}^nd^+_k\right).$$
This proves the inequality part of the theorem.

To have equality, we need that $\det(M(T)_{S^c,S^c})=1$ for each $S\subseteq [n]$ such that $|S^c|$ is even. As Lemma \ref{locally-transitive} shows below, this is equivalent to the tournament being locally transitive.
\end{proof}

\begin{Lemma} \label{locally-transitive}
The following are equivalent for a tournament $T_n$ on $n$ vertices.\\
(i) $T_n$ is locally transitive.\\
(ii) $T_n$ does not contain a $4$ vertex tournament that contains a triangle and a vertex that either dominates or is dominated by the vertices of the triangle, that is, a $4$ vertex tournament that is not locally transitive.\\
(iii) For the skew-symmetric matrix $M(T_n)$ we have
$$\varphi_{M(T_n)}(x)=\frac{1}{2}((x+1)^n+(x-1)^n).$$
(iv) For the skew-symmetric matrix $M(T_n)$ we have
$\det(M(T)_{S,S})=1$ for each $S\subseteq [n]$ with $|S|$ even.
\end{Lemma}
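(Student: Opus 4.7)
My plan is to establish the four-way equivalence in three steps: (i)$\Leftrightarrow$(ii), (iii)$\Leftrightarrow$(iv), and (ii)$\Leftrightarrow$(iv). The first is direct from definitions: local transitivity of $T_n$ fails exactly when some $N^+(v)$ or $N^-(v)$ contains a cyclic triangle (the only non-transitive tournament on three vertices), and such a triangle together with $v$ is precisely a forbidden 4-vertex subtournament as in (ii).

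For (iii)$\Leftrightarrow$(iv), I would expand the characteristic polynomial as
$$\varphi_{M(T_n)}(x) = \sum_{k=0}^n (-1)^k c_k\, x^{n-k}, \qquad c_k = \sum_{|S|=k} \det\bigl(M(T_n)_{S,S}\bigr).$$
By Lemma~\ref{skew-symmetric}(iii), $c_k = 0$ for odd $k$, while the parity argument used in the proof of Theorem~\ref{lower_bound_K_n} (comparing $M(T_n)_{S,S}$ entrywise modulo $2$ to $J_{|S|} - I_{|S|}$) shows $\det(M(T_n)_{S,S})$ is a positive odd integer, hence $\geq 1$, whenever $|S|$ is even. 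Since $\tfrac12\bigl((x+1)^n + (x-1)^n\bigr) = \sum_{k\text{ even}} \binom{n}{k} x^{n-k}$, condition (iii) amounts to $c_k = \binom{n}{k}$ for every even $k$; as $c_k \geq \binom{n}{k}$ with equality iff every summand $\det(M(T_n)_{S,S})$ equals $1$, this is exactly (iv).

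For (ii)$\Leftrightarrow$(iv), the direction (iv)$\Rightarrow$(ii) is immediate: if $T_n$ contains a forbidden 4-vertex subtournament on $S$, a direct $4\times 4$ Pfaffian computation gives $\det(M(T_n)_{S,S}) = 9$, contradicting (iv). For (ii)$\Rightarrow$(iv), since local transitivity is hereditary under induced subtournaments (the out-neighborhood in $T_n[S]$ is a subset of the one in $T_n$, and subsets of transitive tournaments remain transitive), it suffices to prove $\det(M(T)) = 1$ for every locally transitive tournament $T$ on an even number of vertices. I would attempt this by induction on $n$, with trivial base $n = 2$; for the inductive step I would invoke the structural characterization of locally transitive tournaments as \emph{round} tournaments (vertices cyclically orderable so that each $N^+(v_i)$ is a consecutive arc of length $d_i^+$) to identify two vertices whose removal yields a locally transitive sub-tournament on $n-2$ vertices, and then relate $\det(M(T))$ to the smaller determinant via a Pfaffian cofactor expansion.

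The main obstacle is the sign-cancellation in that Pfaffian expansion: naive bounds give only $|\mathrm{Pf}(M(T))| \leq n - 1$, so one needs to exploit the round structure to collapse the signed sum to $\pm 1$. An alternative that avoids the sign bookkeeping is to compute $\allarb(T)$ directly for locally transitive tournaments (by induction on $n$, deleting a vertex of extreme out-degree) and verify that it equals $\tfrac{1}{n}\bigl(\prod_k(d_k^+ + 1) + \prod_k d_k^+\bigr)$; unwinding the proof of Theorem~\ref{lower_bound_K_n} then forces $\det(M(T)_{S,S}) = 1$ for every even $S$, yielding (iv).
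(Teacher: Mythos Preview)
Your plan for (i)$\Leftrightarrow$(ii), (iii)$\Leftrightarrow$(iv), and (iv)$\Rightarrow$(ii) matches the paper essentially verbatim. The gap is in (ii)$\Rightarrow$(iv), where you offer two sketches but complete neither. For the Pfaffian cofactor expansion you correctly note that the naive bound gives only $|\mathrm{Pf}(M(T))|\le n-1$; collapsing the signed sum to $\pm 1$ via the round structure is the whole difficulty, and you do not indicate how to do it. For the alternative route via a direct inductive computation of $\allarb(T)$, deleting a vertex does not give a clean recursion for $\allarb$ in a general (non-regular) locally transitive tournament, and you leave that induction unexecuted as well.

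The paper bypasses both the round structure and any arborescence count. It proves (ii)$\Rightarrow$(iv) by induction on $|S|=2r$: assuming all proper even subsets of $S$ already have determinant $1$, every coefficient of $\varphi_{M(T)_{S,S}}(x)$ is determined except the constant term $\det(M(T)_{S,S})$. Since the eigenvalues of $M(T)_{S,S}$ are $\pm i\alpha_j$, one has $\varphi_{M(T)_{S,S}}(x)=\prod_{j=1}^r(x^2+\alpha_j^2)$, so $\prod_{j=1}^r(y+\alpha_j^2)$ is a real-rooted polynomial in $y$ with non-negative coefficients. Newton's inequality applied at $t=1$ then forces $\det(M(T)_{S,S})\le 3\cdot\tfrac{2r-1}{2r-3}\le 5$ for $r\ge 3$; being an odd perfect square, the determinant must equal $1$. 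This analytic bound via Newton's inequality is the key idea your proposal is missing.
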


\begin{proof}
The equivalence of $(i)$ and $(ii)$ is trivial as a non-transitive tournament always contains a triangle. 

First note that
$$\frac{1}{2}((x+1)^n+(x-1)^n)=\sum_{k=0}^{n/2}\binom{n}{2k}x^{n-2k}.$$

To prove the equivalence of $(iii)$ and $(iv)$, observe that
$$\varphi_{M(T_n)}(x)=\sum_{S\subseteq [n]}x^{n-|S|}(-1)^{|S|}\det(M(T)_{S,S}).$$
By Lemma \ref{skew-symmetric} $(iii)$ and $(iv)$ and the proof of Theorem~\ref{lower_bound_K_n},
$\det(M(T)_{S,S})=0$ if $|S|$ is odd, and $\det(M(T)_{S,S})\geq 1$ is $|S|$ is even. So the coefficient of $x^{n-2k}$ is at least $\binom{n}{2k}$ and equality holds if and only if $\det(M(T)_{S,S})=1$ for all set $S$ of size $2k$. 

Finally, let us prove the equivalence of $(ii)$ and $(iv)$.
It turns out that for the tournaments on $4$ vertices that are not locally transitive, the determinant of the skew-symmetric adjacency matrix is $9$ and for all other tournaments on $4$ vertices, the determinant is $1$.  
This shows that for sets with $|S|=4$, $\det(M(T_n)_{S,S})=1$ if any only if $(ii)$ holds.

Next we show that if for all $|S|=4$ we have $\det(M(T_n)_{S,S})=1$, then we also have $\det(M(T_n)_{S,S})=1$ whenever $|S|$ is even. We prove this statement by induction on the size of $|S|=2r$. Suppose that $|S|\geq 6$ and we already know the statement for all $R\subset S$. Then
$$\varphi_{M(T)_{S,S}}(x)=x^{2r}+\binom{2r}{2}x^{2r-2}+\dots +\binom{2r}{2r-4}x^4+\binom{2r}{2r-2}x^2+\det(M(T)_{S,S}).$$
Note that
$$\varphi_{M(T)_{S,S}}(x)=\prod_{j=1}^{r}(x-i\alpha_j)(x+i\alpha_j)=\prod_{j=1}^{r}(x^2+\alpha_j^2)$$
as $M(T)_{S,S}$ is a skew-symmetric matrix itself. This means that
$$x^{r}+\binom{2r}{2}x^{r-1}+\dots +\binom{2r}{2r-4}x^2+\binom{2r}{2r-2}x+\det(M(T)_{S,S})=\prod_{j=1}^{r}(x+\alpha_j^2),$$
that is, it is a real-rooted polynomial. For a real-rooted polynomial $\sum_{j=0}^da_jx^j$ with non-negative coefficients Newton's inequality says that for $1\leq t\leq d-1$ we have
$$\frac{a_{t-1}}{\binom{d}{t-1}}\cdot \frac{a_{t+1}}{\binom{d}{t+1}}\leq \left(\frac{a_{t}}{\binom{d}{t}}\right)^2.$$
Let us apply this inequality for the above polynomial and $t=1$:
$$\frac{\binom{2r}{4}}{\binom{r}{2}}\cdot \det(M(T)_{S,S})\leq \left(\frac{\binom{2r}{2}}{\binom{r}{1}}\right)^2.$$
This gives that
$$\det(M(T)_{S,S})\leq 3\cdot \frac{2r-1}{2r-3}.$$
Since $r\geq 3$ we get that $\det(M(T)_{S,S})\leq 5$. On the other hand, $\det(M(T)_{S,S})$ is an odd number, moreover, it is the square of the Pfaffian (by Lemma \ref{skew-symmetric} $(iv)$) so if $\det(M(T)_{S,S})>1$, then it is at least $9$. Thus, $\det(M(T)_{S,S})=1$ for every even-size $S$. This completes the proof of the equivalence of the four conditions.
\end{proof}

Next we prove Theorem~\ref{Eulerian_lower_bound_K_n}.

\begin{proof}[Proof of Theorem~\ref{Eulerian_lower_bound_K_n}]

If $T$ is an Eulerian tornament, then all degrees are $\frac{n-1}{2}$ and we immediately get that
\begin{align*}
\arb(O) \geq \frac{1}{n^2}\left(\left(\frac{n+1}{2}\right)^{n} + \left(\frac{n-1}{2}\right)^{n}\right).
\end{align*}
It is also clear that the given tournament is locally transitive, and so achieves the lower bound. The uniqueness follows from the following result of Huang \cite{huang1992tournament} characterising locally transitive digraphs (see also the paper \cite{aboulker2022decomposing}).

\begin{Th}[Huang \cite{huang1992tournament}]
If $D=(V,E)$ is a simple connected digraph. Then the following two conditions are equivalent:\\
(i) $D$ is locally transitive.\\
(ii) There exists a cyclic ordering of the vertices (say clockwise) such that if $(u,v) \in E$, then for every vertex $w$ between $u$ and $v$ we have $(u,w)\in E$ and $(w,v)\in E$.
\end{Th}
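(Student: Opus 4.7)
The plan is to treat Huang's equivalence in two directions, with (ii)~$\Rightarrow$~(i) as a short direct verification and (i)~$\Rightarrow$~(ii) carrying the real content.

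For (ii)~$\Rightarrow$~(i), I fix a vertex $v$ and any two out-neighbors $u, w$ of $v$. Without loss of generality $w$ lies strictly clockwise between $v$ and $u$; applying the arc-covering property from (ii) to the edge $(v,u)$ forces $(w,u) \in E$. Hence $N^+(v)$ induces a tournament, and reading its vertices clockwise from $v$ gives a transitive linear order on it. The symmetric argument, reading counter-clockwise from $v$, handles $N^-(v)$, so $D$ is locally transitive.

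For (i)~$\Rightarrow$~(ii), I construct the cyclic order explicitly. Fix a vertex $v$ and use local transitivity to list $N^+(v) = \{u_1, \ldots, u_k\}$ in its unique transitive order $u_1 \to u_2 \to \cdots \to u_k$, and analogously $N^-(v) = \{w_1, \ldots, w_\ell\}$. The natural candidate is the cyclic order
\[v,\; u_1,\; u_2,\; \ldots,\; u_k,\; w_1,\; w_2,\; \ldots,\; w_\ell,\]
for which property (ii) is immediate for edges of the forms $v \to u_i$ and $w_j \to v$, and for edges lying entirely within $N^+(v)$ or entirely within $N^-(v)$, because in each case the intermediate vertices lie in the relevant transitive neighborhood. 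The main obstacle is the ``mixed'' edges between some $u_i$ and some $w_j$: for each such edge one has to pin down its direction and then check the arc-covering condition. My plan is to apply local transitivity at $u_i$ itself: the unique linear order on the transitive tournament $N^+(u_i)$, compared with the candidate cyclic order, forces $N^+(u_i)$ to be a suffix of $(u_{i+1}, \ldots, u_k)$ followed by a prefix of $(w_1, \ldots, w_\ell)$, and dually for $N^-(u_i)$; this is the rigidity one needs. A cleaner inductive implementation is to remove the sink $u_k$ of $N^+(v)$, observe that local transitivity is inherited by the remaining digraph, apply induction to get a cyclic ordering on $V \setminus \{u_k\}$, and reinsert $u_k$ at the forced position (immediately between $u_{k-1}$ and $w_1$). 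The verification that this reinsertion preserves (ii) for every edge incident with $u_k$ is the crux and reduces to the rigidity statement above. Since Huang's theorem is classical, the paper simply cites~\cite{huang1992tournament} rather than reproving it, and I would similarly defer to his argument for the full case analysis.
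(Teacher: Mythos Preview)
You correctly identify that the paper does not prove Huang's theorem: it is quoted as a black box from \cite{huang1992tournament} (and \cite{aboulker2022decomposing}) inside the proof of Theorem~\ref{Eulerian_lower_bound_K_n}, and only its consequence for Eulerian tournaments is used. So there is no paper-proof to compare your sketch against, and your final sentence --- deferring to Huang for the full argument --- matches exactly what the paper does.

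One caveat on your sketch of (i)~$\Rightarrow$~(ii): the theorem as stated is for arbitrary simple connected digraphs, not tournaments, so a fixed vertex $v$ need not be adjacent to every other vertex. Your proposed cyclic order $v, u_1,\ldots,u_k, w_1,\ldots,w_\ell$ tacitly assumes $V = \{v\}\cup N^+(v)\cup N^-(v)$, which holds in the tournament case relevant to the paper but not in general. The inductive reinsertion idea can be made to work, but the base anchor and the ``forced position'' argument need more care when non-neighbors of $v$ are present. Since you (like the paper) ultimately cite Huang for the details, this does not affect the overall correctness of your write-up.
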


From this theorem we immediately see that given the out-degree sequence of locally transitive tournament, the vertices have a cyclic ordering such that for every vertex $v$ the out-neighbor set $N^+(v)$ is simply the next $d^+(v)$ vertices in the cyclic order. In particular, this shows the uniqueness of the minimizing Eulerian tournament.
\end{proof}

Below we give another proof that the swirl tournament achieves the minimum number of arborencences. This proof is more direct and has the advantage that it connects the eigenvalues of $M(T)$ with the number of arborencences. 

First we prove a lemma that connects the number of arborencenses with the characteristic polynomial of $M(T)$.

\begin{Lemma}\label{lem:skew_adj_of_K_n}
Let $T$ be a $d$-regular tournament on $n=2d+1$ vertices. Let $M(T)$ be the skew-symmetric adjacency matrix of $T$. If the eigenvalues of $M(T)$ are $\alpha_1,\dots ,\alpha_{n-1},\alpha_n=0$, then the eigenvalues of $L(T)$ are $\frac{n-\alpha_1}{2},\dots ,\frac{n-\alpha_{n-1}}{2}$ and $0$. Furthermore,
$$\arb(T)=\frac{1}{n^2 2^{n-1}}\varphi_{M(T)}(n).$$
\end{Lemma}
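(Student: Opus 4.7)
My plan is to exploit the algebraic relationship between $L(T)$, $M(T)$, and the all-ones matrix $J$ for a regular tournament.

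First I would establish the identity $L(T) = \tfrac{1}{2}\bigl(nI - M(T) - J\bigr)$. Since $T$ is a tournament we have $A(T) + A(T)^T = J - I$, hence $A(T) = \tfrac{1}{2}(M(T) + J - I)$. For a $d$-regular tournament, $L(T) = dI - A(T)$, and substituting $d = \tfrac{n-1}{2}$ gives the claimed identity after a short rearrangement.

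Next, I would observe that $\underline{1}$ is a common eigenvector of all three matrices $M(T)$, $J$, $L(T)$: regularity gives $M(T)\underline{1} = \underline{0}$, trivially $J\underline{1} = n\underline{1}$, and consistently $L(T)\underline{1} = \underline{0}$. Since $M(T)$ is skew-symmetric and hence normal (Lemma \ref{skew-symmetric}(i)), I can pick an orthonormal eigenbasis of $M(T)$ containing $\underline{1}$. Any such basis vector $v$ other than $\underline{1}$ is orthogonal to $\underline{1}$, so $Jv = \underline{0}$; if $M(T)v = \alpha v$ then the identity of the first step immediately yields $L(T)v = \tfrac{n-\alpha}{2}v$. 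This establishes the claim about the spectrum of $L(T)$ (and is consistent even if the kernel of $M(T)$ has dimension larger than one, since for $\alpha = 0$ we get $L(T)v = \tfrac{n}{2}v$, matching the formula).

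For the product formula, I would invoke Corollary \ref{cor:product_of_nonzero_eigenvalues_eulerian} which gives $\arb(T) = \frac{1}{n}\prod_{i=1}^{n-1}\lambda_i$ where the $\lambda_i$ are the nonzero eigenvalues of $L(T)$. By the spectral identification,
\begin{align*}
\arb(T) \;=\; \frac{1}{n}\prod_{i=1}^{n-1}\frac{n-\alpha_i}{2} \;=\; \frac{1}{n\cdot 2^{n-1}}\prod_{i=1}^{n-1}(n-\alpha_i).
\end{align*}
Finally, since $\alpha_n = 0$, the characteristic polynomial factors as $\varphi_{M(T)}(x) = x\prod_{i=1}^{n-1}(x - \alpha_i)$, so $\varphi_{M(T)}(n) = n\prod_{i=1}^{n-1}(n-\alpha_i)$, and substituting yields exactly $\arb(T) = \tfrac{1}{n^2 2^{n-1}}\varphi_{M(T)}(n)$.

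The only conceptual care needed is in the spectral step: I must use that the zero eigenspace of $M(T)$ might have dimension larger than one (which does happen for $n$ odd), but normality of $M(T)$ lets me insist on an orthogonal decomposition so that all basis vectors other than the one spanning the distinguished $\underline{1}$ direction are annihilated by $J$; beyond that, the argument is routine linear algebra.
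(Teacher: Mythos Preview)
Your proposal is correct and follows essentially the same route as the paper: the identity $L(T)=\tfrac12(nI-J-M(T))$, the orthonormal eigenbasis of $M(T)$ containing $\underline{1}$, and the product via Corollary~\ref{cor:product_of_nonzero_eigenvalues_eulerian}. Your final computation is in fact slightly cleaner than the paper's, which writes $\prod \tfrac{n+\alpha_i}{2}$ and then appeals to the $\alpha\leftrightarrow -\alpha$ pairing of Lemma~\ref{skew-symmetric}(ii) to convert it to $\prod(n-\alpha_i)$; you go directly to $\prod\tfrac{n-\alpha_i}{2}$ and factor out $\alpha_n=0$ from $\varphi_{M(T)}$, avoiding that detour.
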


\begin{proof}
Note that we have $L(T)=\frac{1}{2}(nI-J-M(T))$. We have $M(T)\underline{1}=\underline{0}$ and $L(T)\underline{1}=\underline{0}$. Real skew-symmetric matrices are diagonalizable (since they are normal), that is, there are eigenvectors $\underline{v}_1,\dots ,\underline{v}_{n-1},\underline{v}_n=\underline{1}$ of $M(T)$ belonging to the eigenvalues $\alpha_1,\dots,\alpha_n=0$ that form a basis. 
Note that $\underline{1}^TM(T)=\underline{0}$, which implies that $\underline{1}$ and $\underline{v}_k$ are orthogonal whenever $\lambda_k\neq 0$. (Indeed, $\underline{0}=(\underline{1}^TM(T))\underline{v}_k =\underline{1}^T(M(T)\underline{v}_k)=\lambda_k\underline{1}^T\underline{v}_k$.) 
If $\lambda_k=0$, then we can choose an othogonal eigenbasis from the eigensubspace belonging to the eigenvalue $0$, so we can assume that $\underline{1}$ and $\underline{v}_k$ are orthogonal in this case, too.

Then for $1\leq k\leq n-1$, we have
$$L(T)\underline{v}_k=\frac{1}{2}(nI-J-M(T))\underline{v}_k=\frac{n-\alpha_k}{2}\underline{v}_k$$
since $J\underline{v}_k=\underline{0}$ as $\underline{1}$ and $\underline{v}_k$ are orthogonal. This proves the first part of the claim.

To prove the second part, observe that
$$\arb(T)=\frac{1}{n}\prod_{i=1}^{n-1}\frac{n+\alpha_i}{2}=\frac{1}{n^2 2^{n-1}}\cdot n\prod_{i=1}^{n-1}(n+\alpha_i)=\frac{1}{n^2 2^{n-1}}\cdot n\prod_{i=1}^{n-1}(n-\alpha_i)=\frac{1}{n^2 2^{n-1}}\varphi_{M(T)}(n).$$
In the third equality, we used the fact that the eigenvalues of $M(T)$ come in pairs $\alpha,-\alpha$, see Lemma \ref{skew-symmetric} $(ii)$.
This proves the second part of the claim. 
\end{proof}

Next, we compute the characteristic polynomial of $M(T)$ for the swirl tournament $T$.

We claim that
$$\varphi_{M(T)}(x)=\frac{1}{2}((x+1)^n+(x-1)^n).$$
$M(T)$ is a circulant matrix so its eigenvectors are of the form $(1,\varepsilon,\varepsilon^2,\dots ,\varepsilon^{n-1})$, where $\varepsilon^n=1$. 

When $\varepsilon=1$ we get the usual $\underline{1}$ eigenvector of $M(T)$ with eigenvalue $0$. Otherwise we get the eigenvalue
$$\sum_{k=1}^d\varepsilon^k- \sum_{k=d+1}^{n-1}\varepsilon^k=(1-\varepsilon^d)\varepsilon\frac{1-\varepsilon^d}{1-\varepsilon}=\frac{\varepsilon-2\varepsilon^{d+1}+1}{1-\varepsilon}.$$
Since $n$ is odd, there is a unique $n$-th root of unity $\eta\neq 1$ for which $\eta^2=\varepsilon$. Then 
$$\frac{\varepsilon-2\varepsilon^d+1}{1-\varepsilon}=\frac{\eta^2-2\eta^{2(d+1)}+1}{1-\eta^2}=\frac{\eta^2-2\eta+1}{1-\eta^2}=\frac{1-\eta}{1+\eta}$$
whence
$$\varphi_{M(T)}(x)=x\prod_{\eta^n=1 \atop \eta\neq 1}\left(x-\frac{1-\eta}{1+\eta}\right).$$
We claim that for odd $n$ this is nothing else than 
$$\frac{1}{2}((x+1)^n+(x-1)^n).$$
It is clear that both polynomials are monic and that $0$ is a root of both polynomials. If $\alpha\neq 0$ is a root of the latter polynomial, then $\alpha\neq -1$, and so
$$\frac{1}{2}((\alpha+1)^n+(\alpha-1)^n)=\frac{1}{2}(\alpha+1)^n\left(1+\left(\frac{\alpha-1}{\alpha+1}\right)^n\right).$$
Since $n$ is odd we get that $\frac{\alpha-1}{\alpha+1}=-\eta$ for some $\eta\neq 1$ for which $\eta^n=1$.  
We get 
$\alpha=\frac{1-\eta}{1+\eta}$, proving that the two polynomials are equal. This completes the proof that the swirl tournament achieves the lower bound 
for the number of arborescences of Eulerian tournaments.
\bigskip

Next we prove Theorem~\ref{transitive_tournament}.

\begin{proof}[Proof of Theorem~\ref{transitive_tournament}]
For the transitive tournament, $L(TR_n)$ is an upper triangular matrix, so $\det(L(TR_n)_{n,n})=(n-1)!$ (the product of the values in the main diagonal). 

We show that for any tournament with outdegree sequence $d_1^+, \dots d_k^+$, we have
$$\prod_{k=1}^n(d^+_k+1)\geq n!.$$
The out-degree sequence of the transitive tournament is $0,1,\dots ,n-1$, which achieves the bound, and this degree sequence determines the transitive tournament. Suppose for contradiction that some tournament $T_n$ with a different degree sequence $d_1^+, \dots, d_n^+$ minimizes $\prod_{k=1}^n(d^+_k+1)$. Then $T_n$ must have two vertices $u$ and $v$ with the same out-degree, say $d^+(u)=d^+(v)=d$. Now flip the orientation of the edge between $u$ and $v$. Suddenly, one of the them has out-degree $d+1$, the other one has $d-1$. No other out-degree changed. This implies that $\prod_{k=1}^n(d^+_k+1)$ decreased as $(d-1+1)(d+1+1)<(d+1)^2$, contradicting the assumption that $T_n$ minimizes the quantity $\prod_{k=1}^n(d^+_k+1)$. So 
$$\allarb(T_n)\geq \frac{1}{n}\left(\prod_{k=1}^n(d^+_k+1)+\prod_{k=1}^nd^+_k\right)\geq \frac{1}{n}n!=(n-1)!=\allarb(TR_n).$$
It is also clear from the proof that equality only holds if $T_n$ is isomorphic to $TR_n$, otherwise one can strictly decrease $\prod_{k=1}^n(d^+_k+1)$.
\end{proof}

\subsection{Minimizing orientation for complete bipartite graphs}

In this section we prove Theorem~\ref{lower_bound_K_nm}. The proof of Theorem~\ref{lower_bound_K_nm} is very similar to the proof of Theorem~\ref{lower_bound_K_n}. In fact, it is a bit simpler.

\begin{proof}[Proof of Theorem~\ref{lower_bound_K_nm}]
Let $D$ be an orientation of $K_{n,m}$. Then the Laplacian matrix of $D$ looks as follows:
$$L(D)=\left(\begin{array}{cc}
\frac{m}{2}I_n & -A_1 \\
-A_2 & \frac{n}{2}I_m 
\end{array}
\right),
$$
where $A_1$ and $A_2$ are matrices of size $n\times m$ and $m\times n$, respectively. Now let us consider the matrix 
$$S(D)=\left(\begin{array}{cc}
\frac{m}{2}I_n & \frac{1}{2}J_{n,m}-A_1 \\
\frac{1}{2}J_{m,n}-A_2 & \frac{n}{2}I_m 
\end{array}
\right),
$$
where $J_{n,m}$ and $J_{m,n}$ are the matrices of all $1$'s of size $n\times m$ and $m\times n$, respectively.
Let us consider the following $4$ vectors in $\mathbb{C}^{n+m}$:
$$\underline{v}_1=(1,1,\dots ,1)\ \ \text{and}\ \ 
 \underline{v}_2=\left(1,\dots ,1,-\frac{n}{m},\dots ,-\frac{n}{m}\right)$$
$$\underline{v}_3=\left(1,\dots ,1,0,\dots ,0\right)\ \ \text{and}\  \ \underline{v}_4=\left(0,\dots ,0,1,\dots ,1\right),$$
where the first $n$ coordinates and the last $m$ coordinates are equal. Observe that
$$L(D)\underline{v}_1=\underline{0},\ \ L(D)\underline{v}_2=\frac{n+m}{2}\underline{v}_2,\ \ S(D)\underline{v}_3=\frac{m}{2}\underline{v}_3,\ \ S(D)\underline{v}_4=\frac{n}{2}\underline{v}_4.$$
Clearly, $\langle \underline{v}_1,\underline{v}_2\rangle=\langle \underline{v}_3,\underline{v}_4\rangle$, let us denote this $2$-dimensional vector space by $V_2$. This is an invariant subspace for both $L(D)$ and $S(D)$. If $\underline{v}\in V_2^{\perp}$, then the sum of the first $n$ coordinates of $\underline{v}$ and the sum of the last $m$ coordinates of $\underline{v}$ are both $0$ implying that 
$L(D)\underline{v}=S(D)\underline{v}$. This shows that the remaining $n+m-2$ eigenvalues of $L(D)$ and $M(D)$ are the same, let us denote the multiset of these eigenvalues by $R$. Indeed, if we take the vectors $\underline{v}_3$ and $\underline{v}_4$ and we extend it to a basis of $\mathbb{C}^{n+m}$ by taking a basis of $V_2^{\perp}$, then in this basis both $L(D)$ and $S(D)$ will have the form $\left(\begin{array}{cc} M_{11} & M_{12} \\ 0 & M_{22}\end{array}\right)$, where $M_{11}$ is a $2\times 2$ matrix describing the action of $L(D)$ and $S(D)$ on $V_2$ and $M_{12}$ and $M_{22}$ are the same for $L(D)$ and $S(D)$ as for $\underline{v}\in V_2^{\perp}$ we have $L(D)\underline{v}=S(D)\underline{v}$. From this it follows that $R$ is the multiset of eigenvalues of $M_{22}$. 

From Corollary~\ref{cor:product_of_nonzero_eigenvalues_eulerian} and the known eigenvalues corresponding to $v_1, v_2, v_3$ and $v_4$ we get that
$$\arb(D)=\frac{1}{n+m}\cdot \frac{n+m}{2}\cdot \prod_{\lambda\in R}\lambda=\frac{1}{n+m}\cdot \frac{n+m}{2}\cdot  \frac{2}{n}\cdot \frac{2}{m}\cdot \det(S(D))=\frac{2}{nm}\det(S(D)).$$
Next observe that $\left(\frac{1}{2}J_{n,m}-A_1\right)^T=-\left(\frac{1}{2}J_{n,m}-A_2\right)$, so 
$$S(D)=\left(\begin{array}{cc}
\frac{m}{2}I_n & B \\
-B^T & \frac{n}{2}I_m 
\end{array}
\right),
$$
Thus
$$\det(S(D))=\sum_{k=0}^{\min(n,m)}\left(\frac{m}{2}\right)^{n-k}\left(\frac{n}{2}\right)^{m-k}\sum_{|S|=|T|=k}\det(B_{S,T})^2.$$
For $k=1$ we have $\sum_{|S|=|T|=k}\det(B_{S,T})^2=\frac{nm}{4}$ as all elements of $B$ are $\pm \frac{1}{2}$. Since all terms are non-negative, we get that 
$$\det(S(D))\geq \left(\frac{m}{2}\right)^{n}\left(\frac{n}{2}\right)^{m}+\frac{nm}{4}\left(\frac{m}{2}\right)^{n-1}\left(\frac{n}{2}\right)^{m-1}=2\left(\frac{m}{2}\right)^{n}\left(\frac{n}{2}\right)^{m},$$
that is
$$\arb(D)\geq \left(\frac{m}{2}\right)^{n-1}\left(\frac{n}{2}\right)^{m-1}.$$
Note that for $B=\frac{1}{2}\left(\begin{array}{cc} J_{n/2,m/2} & -J_{n/2,m/2} \\ -J_{n/2,m/2} & J_{n/2,m/2} \end{array}\right)$ we have $\det(B_{S,T})=0$ whenever $|S|=|T|\geq 2$ since this is a rank $1$ matrix. So we have equality for this matrix. This matrix corresponds exactly to the directed graph described in the theorem. 

Next we show that the minimizing orientation is unique up to isomorphism. Indeed, $B$
must be a rank $1$ matrix as otherwise there would be sets $S$ and $T$ such that $|S|=|T|=2$ and $\det(B_{S,T})>0$ showing that $\arb(D)>\left(\frac{m}{2}\right)^{n-1}\left(\frac{n}{2}\right)^{m-1}$. Since $B$ is rank $1$ there are vectors $\underline{u}$ and $\underline{v}$ such that $B=\frac{1}{2}\underline{u}\underline{v}^T$. By replacing $\underline{u}$ and $\underline{v}$ with $\alpha\underline{u}$ and $\frac{1}{\alpha}\underline{v}$ for some $\alpha \neq 0$ we can assume that $\underline{u}_1=1$. Since all entries of $2B_{ij}$ are $\pm 1$ we immediately get that all entries of $\underline{u}$ and $\underline{v}$ are $\pm 1$. Since each row of $2B$ contain $\frac{m}{2}$ and and each column contain $\frac{n}{2}$ elements that are $1$ and $-1$ we get that it is true for both $\underline{u}$ and $\underline{v}$. This means that $2B$ is isomorphic to $\left(\begin{array}{cc} J_{n/2,m/2} & -J_{n/2,m/2} \\ -J_{n/2,m/2} & J_{n/2,m/2} \end{array}\right)$ up to the permutations of rows and columns. In other words, any minimizing orientation is isomorphic to the one described in the theorem. 
\end{proof}

Let us mention that if $n=m$ we can also describe the eigenvalues of $L(D)$. We do not detail the proof as it is practically the same as the first part of the above proof.

\begin{Lemma}
Let $D$ be an orientation of $K_{n,n}$, where $n$ is even. Let $M(D)$ be its skew-symmetric adjacency matrix, and let $L(D)$ be its Laplacian matrix. Let $\underline{1}$ be the all-$1$ vector of length $2n$, and let $\underline{j}$ be the vector of length $2n$ whose first $n$ coordinates are $1$, and last $n$ coordinates are $-1$. Then
$$L(D)\underline{1}=\underline{0},\ \  L(D)\underline{j}=n\underline{j},\ \ M(D)\underline{1}=\underline{0},\ \ M(D)\underline{j}=\underline{0}.$$
Furthermore, if $\underline{v}$ is an eigenvector of $M(D)$ corresponding to eigenvalue $\alpha$, that is orthogonal to the vectors $\underline{1}$ and $\underline{j}$, then $v$ is an eigenvector of $L(D)$ corresponding to eigenvalue $\frac{n+\alpha}{2}$. In particular,
$$\arb(D)=\frac{1}{2^{2n-1}n^2}\varphi_{M(D)}(n).$$
\end{Lemma}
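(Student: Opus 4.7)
The plan is to imitate the eigenvalue bookkeeping already used in Lemma~\ref{lem:skew_adj_of_K_n} and in the first part of the proof of Theorem~\ref{lower_bound_K_nm}, but now applied directly to $M(D)$ rather than to a modified Laplacian $S(D)$. Since $D$ is an Eulerian orientation of $K_{n,n}$ with $n$ even, every vertex has out-degree $n/2$, so $L(D)$ has $\frac{n}{2}I$ on its diagonal. Writing $A(G)$ for the adjacency matrix of $K_{n,n}$ itself, the identity $A(D)+A(D)^T=A(G)$ together with $M(D)=A(D)-A(D)^T$ gives $A(D)=\tfrac12(A(G)+M(D))$, and hence the key identity
\[
L(D)=\tfrac{n}{2}I-\tfrac{1}{2}A(G)-\tfrac{1}{2}M(D).
\]

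The first step is the direct verification of the four displayed identities. $L(D)\underline{1}=\underline{0}$ holds for any digraph, and $M(D)\underline{1}=\underline{0}$ follows from Eulerianness entry by entry. For $\underline{j}$ the bipartite structure is what matters: if $v_i$ lies in the first class, $(L(D)\underline{j})_i=d^+(v_i)+\sum_{j \text{ in the second class}}m(v_i,v_j)=2d^+(v_i)=n$, and the computation on the second class produces $-n$; analogously $(M(D)\underline{j})_i=-(d^+(v_i)-d^-(v_i))=0$.

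Next I would show that on the subspace $V_2^{\perp}$, where $V_2=\langle \underline{1},\underline{j}\rangle$, the matrix $A(G)$ acts as zero. Indeed, $A(G)=J-\bigl(\begin{smallmatrix}J_n & 0\\ 0 & J_n\end{smallmatrix}\bigr)$, and the orthogonality conditions $\underline{v}\perp \underline{1}$ and $\underline{v}\perp \underline{j}$ force the sum of the first $n$ and the sum of the last $n$ coordinates of $\underline{v}$ to be zero separately, killing both the global $J$ and the two block-$J_n$'s. Plugging into the identity for $L(D)$ gives $L(D)\underline{v}=\tfrac{n-\alpha}{2}\underline{v}$ for any $M(D)$-eigenvector $\underline{v}\in V_2^\perp$ with eigenvalue $\alpha$ (note: the $\pm$ pairing of the eigenvalues of a real skew-symmetric matrix from Lemma~\ref{skew-symmetric}(ii) makes this equivalent to the $\tfrac{n+\alpha}{2}$ of the statement).

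Finally, for the arborescence identity, since $D$ is Eulerian Corollary~\ref{cor:product_of_nonzero_eigenvalues_eulerian} gives $\arb(D)=\frac{1}{2n}\prod_{i=1}^{2n-1}\lambda_i$. Separating the eigenvalue $n$ coming from $\underline{j}$ and pairing the remaining $2n-2$ eigenvalues as $\tfrac{n-\alpha_i}{2}$ yields
\[
\arb(D)=\frac{1}{2n}\cdot n\cdot \frac{1}{2^{2n-2}}\prod_{i=1}^{2n-2}(n-\alpha_i)=\frac{1}{2^{2n-1}n^2}\cdot n^2\prod_{i=1}^{2n-2}(n-\alpha_i),
\]
and the two extra factors of $n$ come precisely from the two zero eigenvalues of $M(D)$ at $\underline{1},\underline{j}$, so $n^2\prod_{i=1}^{2n-2}(n-\alpha_i)=\varphi_{M(D)}(n)$. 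There is no real obstacle: the only mild subtlety is not to double-count $\underline{j}$, which contributes the eigenvalue $n$ to $L(D)$ but $0$ to $M(D)$, and to use the $\pm$-pairing when cleaning up signs.
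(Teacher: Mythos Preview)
Your proof is correct and follows precisely the approach the paper indicates (the paper declines to write out the argument, saying it is ``practically the same as the first part of the above proof'' of Theorem~\ref{lower_bound_K_nm}, combined with the eigenvalue bookkeeping of Lemma~\ref{lem:skew_adj_of_K_n}). Your observation that the computation actually yields $\frac{n-\alpha}{2}$ rather than the stated $\frac{n+\alpha}{2}$ is correct---this is a sign slip in the lemma statement, harmless for the final formula because of the $\pm$-pairing of the skew-symmetric eigenvalues that you invoke.
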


\subsection{Proof of Theorem~\ref{thm:double_graph_minimizer}}

In this section we prove Theorem~\ref{thm:double_graph_minimizer}. The proof is based on the following result of Ostrowski and Taussky. This inequality can be found in \cite{horn2012matrix} as Theorem 7.8.19. 

\begin{Lemma}[Ostrowski and Taussky]\label{conj:symmetric}
Let $A$ be a real square matrix. Assume its symmetric part $\frac{A+A^T}{2}$ is positive-definite. Then,
\begin{align*}
    \det\left(\frac{A+A^T}{2}\right) \leq \det(A)
\end{align*}
with equality if and only if $\frac{A+A^T}{2}=A$.
\end{Lemma}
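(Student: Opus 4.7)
The plan is to reduce the claim to the familiar fact that a real skew-symmetric matrix has purely imaginary spectrum (which is already recorded as Lemma~\ref{skew-symmetric}). Write $A = S + K$, where $S = \tfrac{A+A^T}{2}$ is the symmetric part and $K = \tfrac{A - A^T}{2}$ is the skew-symmetric part. By hypothesis $S$ is positive definite, so it admits a symmetric positive definite square root $S^{1/2}$ with symmetric inverse $S^{-1/2}$. The goal $\det(S) \le \det(A)$ then amounts to an inequality for the determinant of a perturbed identity.

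Concretely, I would factor
\[
A = S + K = S^{1/2}\bigl(I + S^{-1/2} K S^{-1/2}\bigr) S^{1/2},
\]
so that $\det(A) = \det(S)\cdot \det(I + \widetilde K)$ with $\widetilde K := S^{-1/2} K S^{-1/2}$. A one-line calculation shows that $\widetilde K$ is still skew-symmetric: $\widetilde K^T = (S^{-1/2})^T K^T (S^{-1/2})^T = S^{-1/2}(-K)S^{-1/2} = -\widetilde K$. Thus it suffices to prove $\det(I + \widetilde K) \ge 1$ for every real skew-symmetric $\widetilde K$, with equality only when $\widetilde K = 0$.

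For this last step, I invoke Lemma~\ref{skew-symmetric}: the eigenvalues of $\widetilde K$ are of the form $\pm i\alpha_1,\pm i\alpha_2,\dots$ (together with possibly a $0$ eigenvalue if the dimension is odd), where each $\alpha_j \in \mathbb{R}$. The eigenvalues of $I + \widetilde K$ are therefore $1 \pm i\alpha_j$ (and possibly $1$), and
\[
\det(I + \widetilde K) = \prod_j (1 + i\alpha_j)(1 - i\alpha_j) = \prod_j (1 + \alpha_j^2) \ge 1.
\]
Combining, $\det(A) = \det(S)\det(I+\widetilde K) \ge \det(S)$, which is the desired inequality.

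For the equality case: the product above equals $1$ if and only if every $\alpha_j = 0$, i.e.\ $\widetilde K$ has only the eigenvalue $0$. Since $\widetilde K$ is skew-symmetric it is normal (Lemma~\ref{skew-symmetric}(i)) and therefore diagonalizable, so having only $0$ as eigenvalue forces $\widetilde K = 0$. Conjugating back, this is equivalent to $K = 0$, i.e.\ $A = S = \tfrac{A+A^T}{2}$. No step here presents a serious obstacle; the only point that requires a moment's care is verifying that the conjugation by the symmetric matrix $S^{-1/2}$ preserves skew-symmetry, which is the short computation above.
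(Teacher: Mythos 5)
Your proof is correct. The paper does not prove this lemma at all --- it is quoted from Horn and Johnson (Theorem 7.8.19) --- and your argument, congruence by $S^{-1/2}$ to reduce to showing $\det(I+\widetilde K)=\prod_j(1+\alpha_j^2)\ge 1$ for a real skew-symmetric $\widetilde K$ with eigenvalues $\pm i\alpha_j$, is exactly the standard proof of the Ostrowski--Taussky inequality, including the correct handling of the equality case via normality of $\widetilde K$.
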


The following theorem implies Theorem~\ref{thm:double_graph_minimizer}

\begin{Th}\label{thm:sp_tree_lower_bound_for_arb}
Let $G$ be an undirected connected graph with an Eulerian orientation $O$, and let $\sp(G)$ denote the number of spanning trees of the graph $G$. Then
\begin{align*}
    \arb(O) \geq \frac{1}{2^{n-1}}\sp(G)
\end{align*}
with equality if and only if $O$ is the symmetric orientation. 
\end{Th}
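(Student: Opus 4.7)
The plan is to apply the Ostrowski--Taussky inequality (Lemma~\ref{conj:symmetric}) directly to the reduced Laplacian of $O$, after identifying its symmetric part with $\tfrac12 L(G)$. By Tutte's Matrix--Tree theorem, $\arb(O) = \det(L(O)_{\overline{n},\overline{n}})$ for any choice of root, and by Kirchhoff's Matrix--Tree theorem, $\sp(G) = \det(L(G)_{\overline{n},\overline{n}})$, so the desired inequality will follow from $\det(L(O)_{\overline n,\overline n}) \geq \det\bigl(\tfrac12 L(G)_{\overline n,\overline n}\bigr) = 2^{-(n-1)}\sp(G)$.

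The main computation is that $\tfrac{L(O)+L(O)^T}{2} = \tfrac12 L(G)$. Indeed, the Eulerian condition gives $L(O)_{ii} = d^+(v_i) = d(v_i)/2$, while for $i\neq j$ we have $L(O)_{ij} + L(O)_{ji} = -\bigl(m_O(v_i,v_j)+m_O(v_j,v_i)\bigr) = -m_G(v_i,v_j)$. Restricting to $[n-1]$, the symmetric part of $L(O)_{\overline n,\overline n}$ is $\tfrac12 L(G)_{\overline n,\overline n}$, which is positive definite because $G$ is connected (a standard fact about reduced Laplacians of connected graphs). So the hypothesis of Lemma~\ref{conj:symmetric} is satisfied and the inequality drops out immediately.

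For the equality case, Ostrowski--Taussky yields equality precisely when $L(O)_{\overline n,\overline n}$ is symmetric, i.e.\ $m_O(v_i,v_j) = m_O(v_j,v_i)$ for all $i,j \in [n-1]$. To conclude that $O$ is the symmetric orientation, I still need this identity for edges incident to $v_n$. The expected argument is to invoke the Eulerian property at each $v_j$ with $j\neq n$: the equation $\sum_{i} m_O(v_j,v_i) = \sum_{i} m_O(v_i,v_j)$, combined with $m_O(v_j,v_i) = m_O(v_i,v_j)$ already established for $i\neq n$, forces $m_O(v_j,v_n) = m_O(v_n,v_j)$. So $O$ is symmetric on every edge, and conversely the symmetric orientation obviously achieves $L(O) = L(O)^T$ on the reduced block. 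The step I expect to require the most care is the positive-definiteness verification and the clean identification of the symmetric part; neither is deep, but both must be stated carefully so that Lemma~\ref{conj:symmetric} applies without modification.
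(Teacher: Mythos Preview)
Your proposal is correct and follows essentially the same approach as the paper: both identify the symmetric part of $L(O)_{\overline{n},\overline{n}}$ as $\tfrac12 L(G)_{\overline{n},\overline{n}}$, invoke positive-definiteness from connectedness of $G$, and apply the Ostrowski--Taussky inequality (Lemma~\ref{conj:symmetric}) directly. Your treatment of the equality case is slightly more explicit---you spell out the Eulerian condition at each $v_j$ to recover the symmetry on edges incident to $v_n$---whereas the paper simply notes that the row and column sums of $L(O)$ are zero; these are the same argument.
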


\begin{proof}
Since $O$ is Eulerian, we can see that $L(O)+L(O)^T = L(G)$, where $L(G)$ is the Laplacian matrix of the undirected graph, and $L(O)$ is the Laplacian matrix of the directed graph $O$. This is because
\begin{align*}
    L(O)+L(O)^T = (D(O)+D(O)^T)-(A(O)+A(O)^T) = D(G) - A(G) = L(G)
\end{align*}
where we use the fact that $O$ is Eulerian to deduce $D(O)+D(O)^T=D(G)$. As usual let $L(O)_{\overline{n},\overline{n}}$ be the matrix obtained from $L(O)$ by deleting the $n^{th}$ row and column. Then we have $L(O)_{\overline{n},\overline{n}}+L(O)_{\overline{n},\overline{n}}^T=L(G)_{\overline{n},\overline{n}}$. Note that $L(G)$ is a symmetric matrix because $G$ is undirected. Furthermore $L(G)$ is positive-definite since $G$ is connected. This means that $L(G)_{\overline{n},\overline{n}}$ is positive-definite. This means $\frac{L(O)_{\overline{n},\overline{n}}+L(O)_{\overline{n},\overline{n}}^T}{2}=\frac{1}{2}L(G)_{\overline{n},\overline{n}}$ is also positive-definite. From Lemma \ref{conj:symmetric} this implies that
$$\det\left(\frac{L(O)_{\overline{n},\overline{n}}+L(O)_{\overline{n},\overline{n}}^T}{2}\right)\leq \det (L(O)_{\overline{n},\overline{n}}).$$
Therefore we have,
\begin{align*}
\frac{1}{2^{n-1}}\sp(G) &= \frac{1}{2^{n-1}}\det(L(G)_{\overline{n},\overline{n}}) \\
&= \det\left(\frac{1}{2}L(G)_{\overline{n},\overline{n}}\right)\\
&= \det\left(\frac{L(O)_{\overline{n},\overline{n}}+L(O)_{\overline{n},\overline{n}}^T}{2}\right)\\
&\leq \det(L(O)_{\overline{n},\overline{n}})\\
&= \arb(O).
\end{align*}
We have equality in the above if and only if $\det\left(\frac{L(O)_{\overline{n},\overline{n}}+L(O)_{\overline{n},\overline{n}}^T}{2}\right)=\det(L(O)_{\overline{n},\overline{n}})$. From lemma \ref{conj:symmetric} this happens if and only if $\frac{L(O)_{\overline{n},\overline{n}}+L(O)_{\overline{n},\overline{n}}^T}{2}=L(O)_{\overline{n},\overline{n}}$. Because $\frac{L(O)_{\overline{n},\overline{n}}+L(O)_{\overline{n},\overline{n}}^T}{2}=\frac{1}{2}L(G)_{\overline{n},\overline{n}}$ this happens if and only if $\frac{1}{2}L(G)_{\overline{n},\overline{n}}=L(O)_{\overline{n},\overline{n}}$. Finally, since the rows and columns must sum to zero, this is further equivalent to $\frac{1}{2}L(G)=L(O)$. Which means we have equality if and only if $O$ is the symmetric orientation.
\end{proof}

\begin{proof}[Proof of Theorem~\ref{thm:double_graph_minimizer}]
    Immediate from Theorem \ref{thm:sp_tree_lower_bound_for_arb}.
\end{proof}

\begin{Rem}
There is another connection between the number of arborescences and the number of spanning trees. A simple double counting argument shows that if $G$ is a graph on $n$ vertices and $m$ edges, then
$$\frac{1}{2^m}\sum_{O}\mathrm{allarb}(O)=\frac{n}{2^{n-1}}\sp(G),$$
where the summation is for all orientations, not just the Eulerian ones.
\end{Rem}

\section{Upper bounds} \label{sect:upper_bounds}

In this section we prove Theorems~\ref{conj:upper_bound} and \ref{trivial_upper_bound}.

\subsection{General upper bound for digraphs.}

\begin{proof}[Proof of Theorem~\ref{conj:upper_bound}]
Let $L=L(D)$ be the Laplacian, and $A=A(D)$ be the adjacency matrix of $D$. Let the eigenvalues of $L$ be $\lambda_1,\ldots,\lambda_n=0$. Then applying the geometric-quadratic mean inequality we have,
$$\prod_{i=1}^{n-1}\lambda_i= \prod_{i=1}^{n-1} |\lambda_i|\leq \left(\frac{1}{n-1}\sum_{i=1}^{n-1} |\lambda_i|^2\right)^{(n-1)/2}.$$
Using the fact that $\sum_{i=1}^n |\lambda_i|^2\leq ||L||_F^2$ (Lemma~\ref{Frobenius}) we have,
\begin{align*}
\left(\frac{1}{n-1}\right)^{(n-1)/2}\left(\sum_{i=1}^n |\lambda_i|^2\right)^{(n-1)/2}\leq \left(\frac{1}{n-1}\right)^{\frac{n-1}{2}}\left(||L||_F^2\right)^{(n-1)/2}.
\end{align*}
On the other hand,
\begin{align*}
||L||_F^2 = \sum_{i,j}L_{ij}^2 = \sum_{i}L_{ii}^2 + \sum_{i,j}A_{ij}^2 = \sum_{i=1}^n (d^+_i)^2 + m 
\end{align*}
where we use the fact that $D$ is simple.
Altogether,
$$\allarb(D)= \prod_{i=1}^{n-1}\lambda_i
\leq \left(\frac{1}{n-1}\right)^{\frac{n-1}{2}}(||L||_F^2)^{(n-1)/2}
= \left(\frac{1}{n-1}\right)^{\frac{n-1}{2}}\left(\sum_{i=1}^n (d^+_i)^2 + m\right)^{(n-1)/2}$$
which is the desired inequality.

If $O$ is an Eulerian orientation of the simple graph $G$ with degree sequence $d_1, \dots , d_n$, then $d^+_i=\frac{d_i}{2}$ and $\allarb(O) = n\cdot \arb(O)$, whence the second inequality follows.
\end{proof}

\begin{comment}
\begin{Rem}\label{conj:upper_bound_edges}
    The quantity $\sum_{i=1}^n d_i^2$ is known as the first Zagreb index. Various upper bounds for it are summarized in \cite{liu2009new}. The upper bound obtained by de Caen \cite{de1998upper} is
    \begin{align*}
        \sum_{i=1}^n d_i^2 \leq m\left(\frac{2m}{n-1}+n-2\right).
    \end{align*}
     Using the upper bound in \ref{conj:upper_bound} we obtain 
    \begin{align*}
        \arb(O)\leq \frac{1}{n}\left(\frac{1}{n-1}\right)^{n-1}\left(\frac{1}{2}\right)^{n-1}\left(2m^2 + mn^2 + mn - 2m\right)^{\frac{n-1}{2}}.
    \end{align*}
\end{Rem}
\end{comment}

\begin{Cor}\label{upper_bound_max_degree}
Let $D$ be a simple digraph on $n$ vertices and $m$ edges. If $\Delta^+$ is the maximum out-degree of vertices in $D$, then we have
\begin{align*}
    \allarb(D) &\leq n^{\frac{n-1}{2}}\left(\frac{1}{n-1}\right)^{\frac{n-1}{2}}\left((\Delta^+)^2 + \Delta^+ \right)^{\frac{n-1}{2}}.
\end{align*}
In particular if $G$ is a simple Eulerian graph, and $\Delta$ is the maximum degree of vertices in $G$, and $O$ is an Eulerian orientation of $G$, then we have
\begin{align*}
    \arb(O) \leq n^{\frac{n-3}{2}}\left(\frac{1}{n-1}\right)^{\frac{n-1}{2}}\left(\frac{1}{2}\right)^{n-1}\left(\Delta^2+2\Delta \right)^{\frac{n-1}{2}}.
\end{align*}
\end{Cor}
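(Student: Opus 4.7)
The plan is to derive both bounds directly from Theorem~\ref{conj:upper_bound} by substituting crude degree bounds in terms of $\Delta^+$ (respectively $\Delta$). The only inputs needed are the two trivial observations $(d_i^+)^2 \leq (\Delta^+)^2$ and $m = \sum_i d_i^+ \leq n\Delta^+$, so there is no real obstacle beyond a clean calculation.

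For the first inequality, I would start from
$$\sum_{i=1}^n (d_i^+)^2 + m \leq n(\Delta^+)^2 + n\Delta^+ = n\bigl((\Delta^+)^2 + \Delta^+\bigr),$$
and plug this into the bound of Theorem~\ref{conj:upper_bound}. Since the exponent on the parenthesised quantity is $(n-1)/2$, the factor of $n$ pulled out contributes $n^{(n-1)/2}$, yielding exactly
$$\allarb(D) \leq n^{(n-1)/2}\left(\frac{1}{n-1}\right)^{(n-1)/2}\bigl((\Delta^+)^2 + \Delta^+\bigr)^{(n-1)/2}.$$

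For the Eulerian specialisation, I would use that an Eulerian orientation of an undirected graph $G$ of maximum degree $\Delta$ satisfies $\Delta^+ = \Delta/2$, and that $\arb(O) = \allarb(O)/n$ by Corollary~\ref{cor:product_of_nonzero_eigenvalues_eulerian}. Thus
$$(\Delta^+)^2 + \Delta^+ = \frac{\Delta^2}{4} + \frac{\Delta}{2} = \frac{\Delta^2 + 2\Delta}{4},$$
so raising to the power $(n-1)/2$ produces the factor $(1/2)^{n-1}$, and dividing by $n$ shifts the exponent of $n$ from $(n-1)/2$ to $(n-3)/2$. Combining these gives
$$\arb(O) \leq n^{(n-3)/2}\left(\frac{1}{n-1}\right)^{(n-1)/2}\left(\frac{1}{2}\right)^{n-1}\bigl(\Delta^2 + 2\Delta\bigr)^{(n-1)/2},$$
which is the claimed bound. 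Since every step is either a direct application of Theorem~\ref{conj:upper_bound} or an elementary algebraic manipulation, I do not expect any genuine obstacle — the statement is essentially a specialisation and worth stating separately only because $\Delta$ (or $\Delta^+$) is often easier to control than the first Zagreb index $\sum_i d_i^2$.
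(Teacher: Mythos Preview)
Your proposal is correct and follows essentially the same approach as the paper: both plug the crude bounds $\sum_i (d_i^+)^2 \le n(\Delta^+)^2$ and $m \le n\Delta^+$ into Theorem~\ref{conj:upper_bound}. The only cosmetic difference is that for the Eulerian case you specialise the first inequality via $\Delta^+ = \Delta/2$ and divide by $n$, whereas the paper invokes the second statement of Theorem~\ref{conj:upper_bound} directly with $\sum_i d_i^2 \le n\Delta^2$ and $m \le \tfrac{1}{2}n\Delta$; these are manifestly equivalent.
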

\begin{proof}
    This is immediate from Theorem \ref{conj:upper_bound}. For the first inequality we have $d_i^+\leq\Delta^+$ which means $\sum_{i=1}^n (d_i^+)^2 \leq n(\Delta^+)^2$ and $m\leq n\Delta^+$, then the upper bound follows from Theorem \ref{conj:upper_bound}. For the second inequality we have $d_i\leq\Delta$ which means $\sum_{i=1}^n d_i\leq n\Delta^2$ and $m\leq \frac{1}{2}n\Delta$, then the upper bound follows from Theorem \ref{conj:upper_bound}.
\end{proof}

\begin{proof}[Proof of Theorem~\ref{upper_bound_for_K_n}]
Since $d_i=n-1$ and $m=\binom{n}{2}$, the bound in Theorem \ref{conj:upper_bound} implies
\begin{align*}
    \arb(O) &\leq \frac{1}{n}\left(\frac{1}{n-1}\right)^{\frac{n-1}{2}}\left(\frac{1}{4}\sum_{i=1}^{n}d_i^2 + m\right)^{\frac{n-1}{2}}\\
    &= \frac{1}{n}\left(\frac{1}{n-1}\right)^{\frac{n-1}{2}}\left(\frac{n(n-1)^2}{4} + n\frac{n-1}{2}\right)^{\frac{n-1}{2}}\\
    &= \frac{1}{n}\left(\frac{n(n+1)}{4}\right)^{\frac{n-1}{2}}
\end{align*}
which is the desired inequality.

Next, we show that Hadamard tournaments attain the upper bound. Suppose that $O$ is an Hadamard tournament. The definition of Hadamard tournament implies that 
$A_{i,i}=\frac{n-1}{2}$. Thus, $O$ is Eulerian. 
By \cite[Proposition 3.1]{de1992algebraic}, the adjacency matrix $A:=A(O)$ has an eigenvalue $n$ with multiplicity $1$ and eigenvalues $\frac{-1}{2}+ i\frac{\sqrt{n}}{2}$ and $\frac{-1}{2}- i\frac{\sqrt{n}}{2}$ each with multiplicity $\frac{n-1}{2}$. Therefore its Laplacian $L:=L(O)$ has an eigenvalue $0$ with multiplicity $1$ and eigenvalue $\frac{n}{2}\pm i\frac{\sqrt{n}}{2}$ each with multiplicity $\frac{n-1}{2}$. By Corollary \ref{cor:product_of_nonzero_eigenvalues_eulerian},
\begin{align*}
    \arb(O) = \frac{1}{n}\prod_{\lambda_i\neq 0}\lambda_i = \frac{1}{n}\left(\frac{n(n+1)}{4}\right)^{\frac{n-1}{2}}
\end{align*}
as required. 

Now suppose $\arb(O)=\frac{1}{n}\left(\frac{n(n+1)}{4}\right)^{\frac{n-1}{2}}$. 
Then in the proof of Theorem \ref{conj:upper_bound}, we need equality in the geometric mean- quadratic mean inequality for $|\lambda_1|, \dots , |\lambda_{n-1}|$, which implies $|\lambda_1|=\dots =|\lambda_{n-1}|$, thus $|\lambda_i|=\sqrt{\frac{n(n+1)}{4}}$ for $i=1, \dots n-1$.

By Lemma~\ref{lem:skew_adj_of_K_n} we also know that the real part of all non-zero eigenvalues are $\frac{n}{2}$ as the eigenvalues of the skew-symmetric adjacency matrix $M(T)$ are purely complex according to Lemma~\ref{skew-symmetric}. This implies that all eigenvalues are $\frac{n\pm i\sqrt{n}}{2}$ and since they are the eigenvalues of a real matrix, the multiplicities of both numbers are $\frac{n-1}{2}$. So $T$ has exactly three eigenvalues. Because $\arb(O)>0$, we know $O$ has an Eulerian tour due to the BEST theorem, and thus is strongly connected. Since $O$ is a strongly connected tournament and has exactly $3$ distinct eigenvalues, it follows that $O$ is a Hadamard tournament by \cite[Theorem 3.2]{de1992algebraic}.
\end{proof}

\begin{Rem}
The bound in Theorem~\ref{upper_bound_for_K_n} is not tight for all $n$, for example when $n=5$ the bound gives an upper bound of $11.25$. But when $n$ is prime and $n\equiv 3\ (\bmod 4)$, then the bound is attained by the Paley tournament on $n$ vertices. So the bound is tight for an infinite family of orientations.
\end{Rem}

\begin{Rem}
In general, it is not true that an Eulerian orientation maximizes $\mathrm{allarb}(O)$ when there is such an orientation. For instance, for the graph $K_{2,4}$ the Eulerian orientation $O$ --which is unique up to isomorphism-- gives $\mathrm{allarb}(O)=12$, whereas there is an orientation $O'$ for which $\mathrm{allarb}(O')=16$. Nevertheless, we conjecture that $\allarb$ is maximized by an Eulerian orientation for $K_n$ when $n$ is odd, see Conjecture~\ref{max_orientation_K_n}.
\end{Rem}

\subsection{Trivial upper bound.} 

\begin{proof}[Proof of Proposition~\ref{trivial_upper_bound}.]
Once we fix the root to $v_k$, any arborescence rooted at $v_k$ uses exactly one of the $d^+_j$ edges at vertex $j$. Hence the number of such arborescenses is at most $\prod_{j\neq k}d^+_j$, and the number of all arborescenses is at most
$$\mathrm{allarb}(T)\leq \sum_{k=1}^n\prod_{j\neq k}d_j^+<\prod_{k=1}^n(d^+_k+1).$$
\end{proof}

\begin{Rem}\label{rem:ratio} The bounds in Theorem~\ref{lower_bound_K_n}
and Proposition~\ref{trivial_upper_bound} have ratio less than $n$. Probably an even stronger upper bound is true since for Eulerian tournaments the ratio of the upper and lower bounds provided by Theorem~\ref{upper_bound_for_K_n} and \ref{Eulerian_lower_bound_K_n} are within constant factor:
$$\frac{\left(\frac{n(n+1)}{4}\right)^{(n-1)/2}}{\frac{1}{n}\left(\left(\frac{n+1}{2}\right)^n+\left(\frac{n-1}{2}\right)^n\right)}\leq \frac{2e^{3/2}}{e^2+1}\approx 1.06846.$$

\end{Rem}

\section{Concluding remarks and questions}

There is a general intuition that guides in the solution of many Eulerian minimization problems: the orientation minimizing the number of arborescences is the one that has many short directed cycles. Similar intuition appears at many different problems: For the number of spanning trees this is justified by McKay \cite{mckay1983spanning}. In case of Eulerian orientations, the situation is opposite: short cycles tend to increase their number \cite{isaev2024correlation}. In case of Eulerian tours this phenomenon was also observed by Creed (see page 154 of \cite{creed2010counting}: ``a strong connection between the number of short cycles of different lengths and the number of Eulerian tours of graphs'').

In our paper the short cycle phenomenon is less apparent for the complete graph $K_n$, but both for complete bipartite graphs $K_{n,m}$ and double graphs, the minimizing Eulerian orientations are the ones that contain the most directed cycles among cycles of minimal length. The following conjecture is also related to the intuition on many short directed cycles. As we explain below, this conjecture is the special case of \cite[Conjecture 5.6]{sym_ribbon}. 

\begin{Conj}\label{conj:planar}
Let $G$ be an Eulerian planar graph. The orientation minimizing the number of arborenscences is the one that is alternatingly in- and outward oriented at each vertex, that is, where the cycle around each face is oriented.
\end{Conj}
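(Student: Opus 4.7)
The plan is to prove this by adapting the Ostrowski--Taussky approach of Theorem~\ref{thm:sp_tree_lower_bound_for_arb} to the planar setting via planar duality. Since $G$ is a planar Eulerian graph, its faces admit a proper $2$-coloring, and the face-oriented orientation $O^\ast$ -- the unique Eulerian orientation in which all ``black'' faces are oriented clockwise and all ``white'' faces counterclockwise -- plays here the role that the symmetric orientation plays for double graphs. Intuitively, $O^\ast$ is the Eulerian orientation carrying the largest number of short directed cycles (all face cycles are directed), so in view of the short-cycle heuristic mentioned in the concluding section it is the natural candidate for the minimizer.

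The first step would be to find a matrix identity playing the role of $L(O)+L(O)^T = L(G)$ from the proof of Theorem~\ref{thm:sp_tree_lower_bound_for_arb}. The natural candidate uses planar duality: the cycle space of $G$ is spanned by face boundaries, so an Eulerian orientation $O$ of $G$ is determined by a $\pm 1$-valued function on the faces of $G$ (equivalently, on $V(G^\ast)$), and $O^\ast$ corresponds exactly to the canonical two-coloring of $V(G^\ast)$. Using Tutte's matrix-tree theorem together with the relationship between the Laplacian of $G$ and the cographic Laplacian, one hopes to rewrite $\arb(O) = \det M(O)$ for some matrix $M(O)$ indexed by faces, whose \emph{symmetric part} is independent of $O$ (depending only on $G$) and whose \emph{skew part} vanishes precisely when $O = O^\ast$.

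With such a decomposition in hand, applying Lemma~\ref{conj:symmetric} to $M(O)$ would yield
\[
\arb(O) \;=\; \det M(O) \;\geq\; \det\!\left(\tfrac{M(O)+M(O)^T}{2}\right) \;=\; \arb(O^\ast),
\]
with equality iff $M(O)$ is symmetric, i.e.\ $O = O^\ast$. This would simultaneously produce the lower bound and the uniqueness of the minimizer. As a sanity check, in the extreme case where every edge of $G$ is doubled, this construction should reduce to the symmetric orientation statement of Theorem~\ref{thm:double_graph_minimizer}.

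The main obstacle I anticipate is the construction of the matrix $M(O)$: in the double-graph proof the symmetric decomposition is essentially tautological, whereas in the planar case one must genuinely invoke the embedding. Promising routes are (i) a Kasteleyn/Pfaffian-style orientation on the dual, since $O^\ast$ is naturally ``sign-coherent around faces'', and (ii) the symmetric-edge-polytope interpretation from Section~\ref{ss:geom_motivation}, where $\arb(O)$ equals the volume of a facet of the symmetric edge polytope of the cographic matroid and planar duality identifies that polytope with one on $G^\ast$. A secondary technical concern is the positive-definiteness hypothesis of Lemma~\ref{conj:symmetric}: the symmetric part of $M(O)$ would need to be a positive-definite weighted Laplacian-like object on $G^\ast$, and verifying this -- especially when $G^\ast$ is not simple -- is likely where most of the real work will lie.
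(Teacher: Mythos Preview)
This statement is labelled a \emph{Conjecture} in the paper, and the paper does not prove it. It is listed in the concluding section as an open problem, with a remark explaining that it is a special case of \cite[Conjecture~5.6]{sym_ribbon}. So there is no ``paper's own proof'' to compare against.

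As for your proposal itself: it is not a proof but a research outline, and you acknowledge as much (``one hopes to rewrite'', ``the main obstacle I anticipate'', ``is likely where most of the real work will lie''). The entire argument rests on the existence of a face-indexed matrix $M(O)$ with three simultaneous properties: (a) $\det M(O) = \arb(O)$, (b) its symmetric part is independent of $O$, and (c) its skew-symmetric part vanishes exactly when $O = O^\ast$. You have not constructed such a matrix, and nothing in the paper or in standard planar-duality identities guarantees one exists. In the double-graph case the analogous identity $L(O)+L(O)^T = L(G)$ is essentially tautological and holds for \emph{every} Eulerian orientation, with the symmetric orientation singled out only because it makes $L(O)$ itself symmetric; for a \emph{simple} planar Eulerian graph the reduced Laplacian $L(O)_{\overline n,\overline n}$ is never symmetric, so the naive transfer fails, and your proposed replacement $M(O)$ is at this point only a wish. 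Until that matrix is actually produced (and its symmetric part shown positive definite), the Ostrowski--Taussky step cannot even be invoked, so the conjecture remains open.
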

\begin{figure}[H]
	\begin{tikzpicture}[scale=.8]
	\node [circle,fill,scale=.6,draw] (1) at (0,0) {};
	\node [circle,fill,scale=.6,draw] (2) at (1,1.3) {};
	\node [circle,fill,scale=.6,draw] (3) at (-1,1.3) {};
	\node [circle,fill,scale=.6,draw] (4) at (-2,2.6) {};
	\node [circle,fill,scale=.6,draw] (5) at (0,2.6) {};	
	\node [circle,fill,scale=.6,draw] (6) at (2,2.6) {};	
	\path [thick,->,>=stealth] (1) edge [left] node {} (2);
	\path [thick,->,>=stealth] (2) edge [above] node {} (6);
	\path [thick,->,>=stealth] (6) edge [below] node {} (5);
	\path [thick,->,>=stealth] (5) edge [left] node {} (4);
	\path [thick,->,>=stealth] (4) edge [above] node {} (3);
	\path [thick,->,>=stealth] (3) edge [below] node {} (1);
	\path [thick,->,>=stealth] (2) edge [above] node {} (3);
	\path [thick,->,>=stealth] (5) edge [below] node {} (2);
	\path [thick,->,>=stealth] (3) edge [left] node {} (5);
	\end{tikzpicture}
	\caption{An Eulerian plane graph and its alternating orientation.} 
 \label{fig:bipartite_graph}
\end{figure}
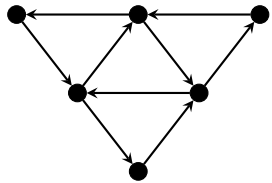

\begin{Rem}
As we remarked in Section \ref{ss:geom_motivation}, the problems considered in this paper are related to a problem concerning volumes of facets of symmetric edge polytopes. Conjecture \ref{conj:planar} can be translated to the symmetric edge polytope language as follows: take the planar dual $G^*$ of the planar Eulerian graph $G$. Since $G$ was Eulerian, $G^*$ is bipartite (say, with partite classes $A$ and $B$). The planar dual of the orientation mentioned in Conjecture \ref{conj:planar} is the orientation of $G^*$ where each edge points from $A$ to $B$. In \cite{sym_ribbon}, this is called a standard orientation of $G^*$.

The facets of the symmetric edge polytope of $G^*$ correspond to planar duals of Eulerian orientations of $G$, and their normalized volume is the arborescence number of these Eulerian orientations. The duals of the Eulerian orientations are exactly those orientations of $G^*$, where for each cycle, the number of edges in the two cyclic directions agree. (In \cite{sym_ribbon}, these are called semi-balanced orientations.) 
Conjecture \ref{conj:planar} says that among these orientations, the standard orientation corresponds to a facet of minimal volume.

In \cite[Conjecture 5.6]{sym_ribbon}, it is further conjectured that the standard orientation corresponds to a facet of minimal volume not only for planar bipartite graphs, but for arbitrary bipartite graphs. Furthermore, it is conjectured that the standard orientation not only minimizes the volume of the corresponding facet of the symmetric edge polytope, but it coefficientwise minimizes the $h^*$-polynomial of the corresponding facet. (The latter would imply the minimization of the volume as well.) 
\end{Rem}

It seems that the structures of the maximizing orientations are much more intricate than that of minimizing orientations. Even in the case of complete graph $K_n$ we do not know the answer if there is no Hadamard tournament on $n$ vertices. In particular, the following problem is open.

\begin{Prob} Which Eulerian tournament $T$ maximizes $\arb(T)$ on $n$ vertices if $n\equiv 1\ (\bmod\ 4)$?
\end{Prob}

The following conjecture seems natural, too.

\begin{Conj} \label{max_orientation_K_n}
Let $n$ be odd. Then the tournament maximizing $\allarb(T_n)$ is Eulerian.
\end{Conj}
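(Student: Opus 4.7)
The plan is a local swap argument. Suppose for contradiction that $T^*$ is a tournament on $n$ vertices (with $n$ odd) maximizing $\allarb$, but $T^*$ is not Eulerian. First I would verify that some pair of out-degrees must differ by at least $2$. Since $\sum_i d^+_i = \binom{n}{2} = n(n-1)/2$ and $n$ is odd, if all out-degrees lay in $\{(n-1)/2,(n+1)/2\}$ a direct sum count forces all of them to equal $(n-1)/2$. Hence there must exist $u,v$ with $d^+(u) \geq d^+(v) + 2$. A short pigeonhole on $V\setminus\{u,v\}$, comparing $|N^+(u)|+|N^-(v)|$ with $n-2$, then produces a vertex $w\neq u,v$ with $u\to w$ and $w\to v$.

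Next I would consider the local move reversing both $u\to w$ and $w\to v$, producing a tournament $T'$. This preserves $d^+(w)$ while shifting $d^+(u)\mapsto d^+(u)-1$ and $d^+(v)\mapsto d^+(v)+1$, so the degree gap strictly shrinks. If one can prove
\[
\allarb(T') > \allarb(T^*),
\]
then iterating such moves eventually reaches an Eulerian tournament, contradicting the maximality of $T^*$.

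To try to prove this inequality I would use the identity $2^{n-1}n\cdot \allarb(T)=\det(D(T)-M(T))$ from the proof of Theorem~\ref{lower_bound_K_n}, where $D(T)=\mathrm{diag}(2d^+_i+1)$ and $M(T)$ is the skew-symmetric adjacency matrix, together with the expansion
\[
\det(D(T)-M(T)) \;=\; \sum_{\substack{S\subseteq [n]\\ |S|\text{ odd}}} \prod_{k\in S}(2d^+_k+1)\,\det\bigl(M(T)_{S^c,S^c}\bigr),
\]
where only odd-size $S$ survive and, as the paper notes, all sign factors are positive when $n$ is odd. Under the move the product $\prod_{k\in S}(2d^+_k+1)$ changes only through the factors at $u$ and $v$, while $M(T)$ is altered by sign flips at the four entries indexed by the pairs $(u,w),(w,u),(w,v),(v,w)$. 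The task is then a term-by-term (or aggregate) comparison of the two sums.

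The main obstacle is establishing strict monotonicity under this move: the interaction between the sign flips in $M(T)$ and the shifts in $D(T)$ is subtle, and, unlike in the minimization direction, there is no obvious cancellation-free lower bound. One natural route is combinatorial: interpret each $\det(M(T)_{S^c,S^c})$ as a squared Pfaffian enumerating signed perfect matchings of $S^c$, and combine this with Tutte's matrix-tree theorem to construct an explicit strict injection from arborescences of $T^*$ into those of $T'$. A second, possibly cleaner route is analytic: extend $\allarb$ to the polytope of fractional tournaments $(x_{ij})$ with $x_{ij}+x_{ji}=0$ and $|x_{ij}|\leq 1$, view $\allarb$ as a polynomial in these variables, and prove a Schur-concavity (or strict log-concavity) statement in the out-degree sequence; an extreme-point argument then recovers the discrete claim. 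I expect the Schur-concavity route to be the most promising, though proving strictness at the extreme-point step still seems to require the cycle-reversal move above, and that is the step I anticipate will be the hardest.
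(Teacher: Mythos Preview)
This statement is a \emph{Conjecture} in the paper, listed among the open problems in the concluding section; the paper offers no proof, so there is nothing to compare your attempt against.

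Your proposal is not a proof either, as you yourself acknowledge. The preliminary steps are fine: for odd $n$ a non-Eulerian tournament must have two out-degrees differing by at least $2$; the pigeonhole producing $w$ with $u\to w\to v$ is correct in both cases $u\to v$ and $v\to u$; and the expansion of $\det(D(T)-M(T))$ over odd-size $S$ with all positive signs is accurate when $n$ is odd. But the entire content of the conjecture is the step you label ``the main obstacle'': that the two-edge reversal strictly increases $\allarb$. Neither of your sketched routes comes close. In the combinatorial route, the sign-flips in $M(T)$ can push individual Pfaffian-squared terms $\det(M(T)_{S^c,S^c})$ in either direction, so there is no termwise monotonicity to exploit and no evident injection on arborescences. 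The Schur-concavity route is not even well-posed as stated, since the out-degree sequence does not determine $\allarb$; making it precise would require a concavity statement for the degree-$(n-1)$ polynomial $\allarb$ on the full tournament polytope, which would itself be at least as hard as the conjecture. What you have written is a clean articulation of why the problem is open, not a path to resolving it.
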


Seemingly Conjecture~\ref{max_orientation_K_n} does not follow the short cycle heuristic since the number of directed triangles in a tournament is
$$\binom{n}{3}-\sum_{k=1}^n\binom{d^+_k}{2},$$
and this is maximized for Eulerian orientations. The reason is simple: the short cycle intuition is valid only after fixing the out-degree sequence. Among Eulerian tournaments it is indeed the swirl tournament that maximizes the number of directed $4$-cycles, see \cite{linial2016number}.

The same questions for $K_{n,m}$ are also open.

\begin{Prob} Which Eulerian orientation $O$ maximizes $\arb(O)$ on $K_{n,m}$? Which orientation $O$ maximizes $\allarb(O)$ on $K_{n,m}$?
\end{Prob}

\bibliography{references}
\bibliographystyle{plain}

\end{document}